\theoremstyle{plain}
\newtheorem{theorem}{Theorem}[section]
\newtheorem{corollary}[theorem]{Corollary}
\newtheorem{proposition}[theorem]{Proposition}
\theoremstyle{definition}
\newtheorem{example}[theorem]{Example}
\newtheorem{conjecture}[theorem]{Conjecture}
\theoremstyle{remark}
\tikzstyle{noeud}=[circle,inner sep=2, minimum size =3 pt, line width = 1pt, draw=black, fill=white]
\newcommand{\Z}{{\mathbb{Z}}}
\newcommand{\sg}{{\rm sg}}
\begin{document}

\title{Strong geodetic problem on complete multipartite graphs}

\author{
	Vesna Ir\v si\v c $^{a,b}$
	\and
	Matja\v z Konvalinka $^{a,b}$
}

\date{\today}

\maketitle
\begin{center}
	$^a$ Faculty of Mathematics and Physics, University of Ljubljana, Slovenia\\
	\medskip

	$^b$ Institute of Mathematics, Physics and Mechanics, Ljubljana, Slovenia
\end{center}

\begin{abstract}
The strong geodetic problem is to find the smallest number of vertices such that by fixing one shortest path between each pair, all vertices of the graph are covered. In this paper we study the strong geodetic problem on complete bipartite graphs; in particular, we discuss its asymptotic behavior. Some results for complete multipartite graphs are also derived. Finally, we prove that the strong geodetic problem restricted to (general) bipartite graphs is NP-complete.
\end{abstract}

\noindent {\bf Key words:} geodetic problem; strong geodetic problem; (complete) bipartite graphs; (complete) multipartite graphs

\medskip\noindent
{\bf AMS Subj.\ Class:} 05C12, 05C70; 68Q17

\section{Introduction}
\label{sec:intro}

The strong geodetic problem was introduced in~\cite{MaKl16a} as follows. Let $G=(V,E)$ be a graph. For a set $S\subseteq V$, and for each pair of vertices $\{x,y\}\subseteq S$, $x\ne y$, define $\widetilde{g}(x,y)$ as a {\em selected fixed} shortest path between $x$ and $y$. We set
$$\widetilde{I}(S)=\{\widetilde{g}(x, y) : x, y\in S\}\,,$$ and $V(\widetilde{I}(S))=\bigcup_{\widetilde{P} \in \widetilde{I}(S)} V(\widetilde{P})$. If $V(\widetilde{I}(S)) = V$ for some $\widetilde{I}(S)$, then the set $S$ is called a {\em strong geodetic set}. This means that the selected fixed geodesics between vertices from $S$ cover all vertices of the graph $G$. If $G$ has just one vertex, then its vertex is considered the unique strong geodetic set. The {\em strong geodetic problem} is to find a minimum strong geodetic set of $G$. The size of a minimum strong geodetic set is the {\em strong geodetic number} of $G$ and is denoted by $\sg(G)$. A strong geodetic set of size $\sg(G)$ is also called an {\em optimal strong geodetic set}.

In the first paper~\cite{MaKl16a}, it was proved that the problem is NP-complete. The invariant has also been determined for complete Apollonian networks~\cite{MaKl16a}, thin grids and cylinders~\cite{Klavzar+2017}, and balanced complete bipartite graphs~\cite{bipartite}. Some properties of the strong geodetic number of Cartesian product of graphs have been studied in~\cite{products}. Recently, a concept of strong geodetic cores has been introduced and applied to the Cartesian product graphs~\cite{cores}. An edge version of the problem was defined and studied in~\cite{MaKl16b}.

The strong geodetic problem is just one of the problems which aim to cover all vertices of a graph with shortest paths. Another such problem is the \emph{geodetic problem}, in which we determine the smallest number of vertices such that the geodesics between them cover all vertices of the graph~\cite{bresar-2011b, chartrand-2000, hansen-2007, HLT93}. Note that we may use more geodesics between the same pair of vertices. Thus this problem seems less complex than the strong geodetic problem. It is known to be NP-complete on general graphs~\cite{atici}, on chordal and bipartite weakly chordal graphs~\cite{dourado}, on co-bipartite graphs~\cite{ekim-2014}, and on graphs with maximal degree $3$~\cite{bueno}. However, it is polynomial on co-graphs and split graphs~\cite{dourado}, on proper interval graphs~\cite{ekim-2012}, on block-cactus graphs and monopolar chordal graphs~\cite{ekim-2014}. Moreover, the geodetic number of complete bipartite (and multipartite) graphs is straightforward to determine, i.e.\ $\sg(K_{n,m}) = \min\{n,m,4\}$~\cite{HLT93}.

Recall from~\cite{bipartite} that the strong geodetic problem on a complete bipartite graph can be presented as a (non-linear) optimization problem as follows. Let $(X, Y)$ be the bipartition of $K_{n_1, n_2}$ and $S = S_1 \cup S_2$, $S_1 \subseteq X$, $S_2 \subseteq Y$, its strong geodetic set.  Let $|S_i| = s_i$ for $i \in \{1,2\}$. Thus, $\sg(K_{n_1, n_2}) \leq s_1 + s_2$. With geodesics between vertices from $S_1$ we wish to cover vertices in $Y - S_2$. Vice versa, with geodesics between vertices from $S_2$ we are covering vertices in $X - S_1$. The optimization problem for $\sg(K_{n_1, n_2})$ reads as follows:
\begin{align}
\label{optimizationProblem}
\begin{split}
\min \quad & s_1 + s_2 \\
\text{subject to: } & 0 \leq s_1 \leq n_1\\
&  0 \leq s_2 \leq n_2\\
&  \binom{s_2}{2} \geq n_1 - s_1\\
&  \binom{s_1}{2} \geq n_2 - s_2\\
& s_1, s_2 \in \Z.
\end{split}
\end{align}

This holds due to the fact that every geodesic in a complete bipartite graph is either of length $0$, $1$ (an edge), or $2$ (a path with both endvertices in the same part of the bipartition). If a strong geodetic set $S$ has $k$ vertices in one part of the bipartition, then geodesics between those vertices can cover at most $\binom{k}{2}$ vertices in the other part.

The exact value is known for balanced complete bipartite graphs: if $n \geq 6$, then
$$\sg(K_{n,n}) = \begin{cases}
2 \left \lceil \displaystyle \frac{-1 + \sqrt{8 n + 1}}{2} \right \rceil, & 8n - 7 \text{ is not a perfect square},\vspace{0.2cm}\\
2 \left \lceil \displaystyle \frac{-1 + \sqrt{8 n + 1}}{2} \right \rceil - 1, & 8n - 7 \text{ is a perfect square}.
\end{cases}$$

See \cite[Theorem 2.1]{bipartite}.

In the following section, we generalize the above result to all complete bipartite graphs. In particular, we determine the asymptotic behavior of $\sg(K_{n,m})$.

As we also consider complete multipartite graphs, it is useful to recall the notation $\langle 1^{m_1}, \ldots, k^{m_k} \rangle$ which describes a partition with $m_i$ parts of size $i$, $1 \leq i \leq k$.

To conclude the introduction, we state the following interesting and surprisingly important fact.

\begin{proposition}
	\label{prop:two}
	For every complete multipartite graph there exist an optimal strong geodetic set such that its intersection with all but two parts of the multipartition is either empty or the whole part.
\end{proposition}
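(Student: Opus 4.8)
The plan is to analyze the structure of geodesics in a complete multipartite graph $G$ with parts $V_1,\ldots,V_k$. As in the bipartite case, every geodesic has length $0$, $1$, or $2$: an edge joins two vertices in different parts, and a path of length $2$ joins two vertices in the same part (through any vertex in another part). So if $S$ is a strong geodetic set with $|S\cap V_i|=s_i$, then the pairs of $S$ lying inside $V_i$ give geodesics of length $2$ whose midpoints lie outside $V_i$, while pairs of $S$ in distinct parts give edges that cover nothing new. Hence the only vertices a strong geodetic set can cover via length-$2$ geodesics are distributed among the parts, and from part $V_i$ we get at most $\binom{s_i}{2}$ such midpoints to spend on the other parts.

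**Key steps.** First I would fix an optimal strong geodetic set $S$ together with a choice of fixed geodesics witnessing that it is strong, and record for each pair $\{i,j\}$ the number $a_{ij}$ of midpoints placed in $V_j$ coming from pairs inside $V_i$; these must satisfy $\sum_{j\ne i} a_{ij}\le\binom{s_i}{2}$ and, for covering, $\sum_{i\ne j} a_{ij}\ge |V_j|-s_j$ for every $j$. Second, I would argue by an exchange/averaging argument: suppose at least three parts, say $V_1,V_2,V_3$, have $0<s_i<|V_i|$. The idea is to show we can modify $S$ (without increasing $|S|$) so that one of these three becomes either empty or full. Concretely, one compares the "efficiency'' of a part: a part with $s_i$ chosen vertices contributes $s_i$ to the objective and supplies $\binom{s_i}{2}$ coverage capacity, so partially-filled parts are the flexible ones. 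I would show that among three flexible parts, we can shift a chosen vertex out of the least efficient one (turning it toward empty) and compensate either by enlarging another flexible part or by re-routing geodesics, using the slack $\binom{s_i}{2}-\sum_j a_{ij}\ge 0$ and the fact that $\binom{s}{2}$ is convex so concentrating vertices into fewer flexible parts only increases total capacity. Iterating drives the number of flexible parts down to two. Third, I would separately handle the degenerate cases (fewer than two flexible parts to begin with, or parts of size $1$ or $2$ where "flexible'' collapses).

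**Main obstacle.** The delicate point is the compensation step: when I remove a vertex from a flexible part $V_1$ I lose $\binom{s_1}{2}-\binom{s_1-1}{2}=s_1-1$ units of coverage capacity and I also must now cover one more vertex of $V_1$ itself (that newly un-chosen vertex), so the re-routing has to absorb both effects while keeping the objective from rising. The clean way to make this work is a careful case split on whether some other flexible part can cheaply swallow these requirements by growing (using convexity of $\binom{\cdot}{2}$ to show that if two flexible parts have sizes $s,s'$ then moving toward sizes $s+1,s'-1$ never decreases $\binom{s}{2}+\binom{s'}{2}$ unless $s<s'$, so we always merge into the larger), versus the boundary situation where growth would exceed a part's size, in which case that part becomes full and we are done. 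I expect this convexity-driven merging, combined with a minimality argument on the number of flexible parts, to be the technical heart; everything else is bookkeeping with the capacity inequalities $\sum_{j\ne i}a_{ij}\le\binom{s_i}{2}$ and the covering inequalities.
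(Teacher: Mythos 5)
Your overall strategy---an exchange argument that, given three partially chosen parts, moves a vertex out of the smallest one while keeping $|S|$ fixed---is the same as the paper's. But the step you yourself flag as the ``technical heart'' is exactly where the plan breaks down, and the convexity argument you propose does not repair it. The difficulty is not aggregate capacity: it is that the $\binom{s_i}{2}$ midpoints supplied by pairs inside $V_i$ can only land \emph{outside} $V_i$. When you delete $x$ from the smallest flexible part $V_1$, you lose the $s_1-1$ geodesics through $x$, and some of the midpoints they were covering may lie in the very part $V_2$ into which you want to move the vertex; the enlarged set in $V_2$ can never re-cover those, no matter how much total capacity has been gained. Checking that $\binom{s_1-1}{2}+\binom{s_2+1}{2}\geq\binom{s_1}{2}+\binom{s_2}{2}$ (true whenever $s_2\geq s_1$) is therefore not enough; you would also need the slack of the covering constraint at $V_2$ \emph{alone} to absorb up to $s_1-2$ newly uncovered vertices of $V_2$, and nothing in your setup guarantees that.

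The paper circumvents this with an idea absent from your proposal: it considers \emph{two} candidate destinations simultaneously, $T_2$ (move the vertex to $V_2$, adding some $y\in V_2\setminus S_2$) and $T_3$ (move it to $V_3$), and shows that at least one of them works. Writing $B_2,B_3$ for the midpoints in $V_2,V_3$ covered by geodesics inside $S_1$ and $C$ for those in the remaining parts, $T_i$ succeeds as soon as the surviving geodesics can be rearranged so that $B_i$ stays covered, since everything else left uncovered lies outside $V_i$ and is reachable from the newly added vertex. If $|B_2\cup C|\geq s_1-1$, the loss of $s_1-1$ geodesics can be charged entirely outside $V_3$, so $T_3$ works, and symmetrically; the residual case $|B_i\cup C|<s_1-1$ and $|B_i|\geq 2$ for both $i$ is impossible because $|B_2\cup B_3\cup C|=\binom{s_1}{2}$ then forces $s_1<4$, hence $|B_2\cup B_3|\leq 3$. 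Without this two-destination dichotomy (or an equivalent transportation-feasibility analysis with the ``no self-supply'' constraint built in), your compensation step is a genuine gap rather than mere bookkeeping.
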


\proof
Let $G = K_{n_1, \ldots, n_r}$ be a complete multipartite graph with the multipartition $X_1, \ldots, X_r$, $|X_i| = n_i$ for all $i \in [r]$. Let $S = S_1 \cup \cdots \cup S_r$ be an optimal strong geodetic set, $S_i \subseteq X_i$ and $|S_i| = s_i$ for $i \in [r]$.

Suppose that $s_i \in \{1, \ldots, n_i - 1\}$ for $i \in \{1,2,3\}$. Without loss of generality, $s_1 = \min\{ s_1, s_2, s_3 \}$. Let $x \in S_1$, $y \in X_2 - S_2$, and $z \in X_3 - S_3$.

Define
\begin{align*}
	T_2 & = (S_1 - \{x\}) \cup (S_2 \cup \{y\}) \cup S_3 \cup \cdots \cup S_r, \\
	T_3 & = (S_1 - \{x\}) \cup S_2 \cup (S_3 \cup \{z\}) \cup S_4 \cup \cdots \cup S_r.
\end{align*}
Notice that $|S| = |T_2| = |T_3| = \sg(G)$. We now show that either $T_2$ or $T_3$ is a strong geodetic set of $G$.

Let $B_i$, $i \in \{2,3\}$, be the vertices in $X_i$ that are covered by the geodesics between vertices in $S_1$, and let $C$ be the vertices in $X_4 \cup \cdots \cup X_r$ that are covered by the geodesics between vertices in $S_1$. Geodesics between vertices in $S_1 - \{x\}$ cover $s_1 - 1$ vertices fewer than geodesics between vertices in $S_1$.

If the remaining geodesics can be rearranged so that $B_i$ is completely covered, then $T_i$ is a strong geodetic set, as the remaining uncovered vertices lie outside of $X_i$, so they can be covered by $s_i \geq s_1$ geodesics between $S_i$ and $y$ or $z$, $i \in \{2,3\}$.

If $|B_i \cup C| \geq s_1 - 1$, these
 geodesics can be rearranged so that $B_{5-i}$ is completely covered. If $|B_i| \leq 1$, we can also consider $B_i$ as completely covered (the possibly uncovered vertex can be only $y$ or $z$).

The only remaining case is if $|B_i \cup C| < s_1 - 1$, and $|B_i| \geq 2$ for $i \in \{2,3\}$. Since $|B_2 \cup B_3 \cup C| = \binom{s_1}{2}$, the first condition implies $s_1 < 4$, which implies $|B_2 \cup B_3| \leq \binom{s_1}{2}\leq 3$, so this cannot occur at all.

This means that every strong geodetic set with three or more parts of size unequal to $0$ or $n_i$ can be transformed into a strong geodetic set of the same size, where one of these parts becomes smaller and one larger. After repeating this procedure on other such triples, at most two parts can have size different from $0$ or $n_i$.
\qed

The rest of the paper is organized as follows. In the next section, some further results about the strong geodetic number of complete bipartite graphs are obtained. In Section~\ref{sec:complete_multipartite} we discuss the strong geodetic problem on complete multipartite graphs. Finally, in Section~\ref{sec:NP} the complexity of the strong geodetic problem on multipartite and complete multipartite graphs is discussed.

\section{On complete bipartite graphs}
\label{sec:complete_bipartite}

In this section, we give a complete description of the strong geodetic number of a complete bipartite graph. The result is unusual, however, as it does not directly say how to compute $\sg(K_{n,m})$; instead, it classifies triples $(n,m,k)$ for which $\sg(K_{n,m}) = k$.

Define $$f(\alpha, \beta) = \alpha - 1 + \binom{\max\{\beta-1, 2\}}{2} \, .$$

\begin{theorem}
	\label{thm:sg=k}
	For positive integers $n$, $m$ and $k$, $\sg(K_{n,m}) = k$ if and only if
	$$n < k \; \& \; m = f(k, n) \quad \text{or} \quad m < k \; \& \; n = f(k, m) \quad \text{or}$$
	$$f(k, i-1) \leq m \leq f(k, i) \; \& \; f(k, k-i-1) \leq n \leq f(k, k-i) \; \text{for some }i, 0 \leq i \leq k.$$
	The only exceptions are $\sg(K_{1,1}) = 2$ and $\sg(K_{2,2}) = 3$.
\end{theorem}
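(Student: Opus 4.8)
The plan is to work entirely through the optimization reformulation~\eqref{optimizationProblem}: $\sg(K_{n,m})$ is the least value of $s_1+s_2$ over integers with $0\le s_1\le n$, $0\le s_2\le m$, $\binom{s_2}{2}\ge n-s_1$ and $\binom{s_1}{2}\ge m-s_2$. Put $\phi_k(j)=j+\binom{k-j}{2}$ and $\psi_k(j)=(k-j)+\binom{j}{2}=\phi_k(k-j)$, so that the split $(s_1,s_2)=(j,k-j)$, $0\le j\le k$, covers $K_{n,m}$ exactly when $j\le n\le\phi_k(j)$ and $k-j\le m\le\psi_k(j)$. A one-line padding argument (raise any coordinate that is not yet at its cap by one; both binomial inequalities only get easier) shows that $\sg(K_{n,m})=k$ if and only if some split of size exactly $k$ covers $K_{n,m}$ while no split of size exactly $k-1$ does. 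By the symmetry $(n,m)\leftrightarrow(m,n)$, which interchanges the first two conditions and replaces $i$ by $k-i$ in the third, we may also assume $n\ge m$. The translation into the language of the statement rests on the identities $f(k,\beta)=\psi_k(\max\{\beta,3\})$ and $f(k,k-\beta)=\phi_k(\min\{\beta,k-3\})$: for $i$ in the interior range $4\le i\le k-4$ the third condition becomes ``$\phi_k(i+1)\le n\le\phi_k(i)$ and $\psi_k(i-1)\le m\le\psi_k(i)$'', and the clamps $\max\{\cdot,3\}$, $\min\{\cdot,k-3\}$, together with the first two conditions, absorb the degenerate splits in which a part of the bipartition contributes $0$, $1$, $2$, or all of its vertices --- noting that $1$ or $2$ vertices in a part cover at most one vertex of the other part, so they are never better than $0$.

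The second ingredient is the monotonicity of $\phi_k$ and $\psi_k$: $\phi_k$ is strictly decreasing on $\{0,\dots,k-2\}$, with $\phi_k(k-2)=\phi_k(k-1)=k-1$ and $\phi_k(k)=k$; dually $\psi_k$ is strictly increasing on $\{2,\dots,k\}$, with $\psi_k(0)=k$ and $\psi_k(1)=\psi_k(2)=k-1$; moreover $\phi_k(j+1)=\phi_{k-1}(j)+1$ and $\psi_k(j)=\psi_{k-1}(j)+1$. This drives the implication ``conditions $\Rightarrow\sg(K_{n,m})=k$''. Suppose $(n,m)$ meets the third condition for an interior $i$. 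The two upper bounds say that the size-$k$ split $(i,k-i)$ covers $K_{n,m}$, so $\sg\le k$. The two lower bounds rewrite, via the last two displayed identities, as $n>\phi_{k-1}(i)$ and $m>\psi_{k-1}(i-1)$; strict monotonicity then makes ``$n\le\phi_{k-1}(j)$'' force $j\le i-1$ and ``$m\le\psi_{k-1}(j)$'' force $j\ge i$, so no size-$(k-1)$ split covers $K_{n,m}$ and $\sg\ge k$. The first two conditions, and the boundary indices $i\le 3$ and $i\ge k-3$, are handled by the same computation performed directly on the degenerate splits; this is also where the equality ``$m=f(k,n)$'' is seen to be forced rather than merely an inequality.

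For the converse, fix an optimal split $(s_1,s_2)$, $s_1+s_2=k$. If $3\le s_1\le k-3$ with $s_1<n$ and $s_2<m$, then $(n,m)$ satisfies the third condition with $i=s_1$: the upper bounds are just feasibility, and the lower bounds say exactly that the two size-$(k-1)$ splits $(s_1-1,k-s_1)$ and $(s_1,k-1-s_1)$ fail, which they must since $\sg>k-1$. In every remaining case one of the parts of the bipartition contributes $0$, $1$, $2$, or all of its vertices to $S$; feasibility together with the non-existence of a size-$(k-1)$ covering then pins $n$ or $m$ to a value of the shape $f(k,\cdot)$, placing $(n,m)$ on one of the vertical/horizontal segments of the first two conditions or (when, say, $m=k$) in a box of the third with a suitably small index. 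Finally, tracking where a clamped value of $f$ strictly overstates the true covering power of a part --- which happens only when both $n\le 2$ and $m\le 2$ --- isolates $K_{1,1}$ and $K_{2,2}$, for which the formula would predict $\sg=1$ and $\sg=2$ in place of the true $2$ and $3$; a direct check of these two graphs finishes the proof.

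The step I expect to be the genuine obstacle is precisely this boundary bookkeeping. Strict monotonicity of $\phi_k$ and $\psi_k$ breaks down exactly at the last two arguments (the sequences go flat and then tick up by one), the clamps $\max\{\cdot,3\}$ and $\min\{\cdot,k-3\}$ in $f$ are calibrated to this phenomenon, and the degenerate splits $s_i\in\{0,1,2\}$ and their mirrors $s_i\in\{k-2,k-1,k\}$ must all be cross-checked against the size-$(k-1)$ splits; verifying that this never goes wrong except at $K_{1,1}$ and $K_{2,2}$ --- rather than on some larger family of small graphs --- is where the care is needed.
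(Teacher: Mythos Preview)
Your approach is correct and is a genuinely different, more conceptual route than the paper's. The paper proceeds by an explicit enumeration of sixteen overlapping cases, in each one writing down a concrete split $(s_1,s_2)$ with $s_1+s_2=k$ and then arguing, case by case, that no split of size $k-1$ can cover $K_{n,m}$; only two of the sixteen lower-bound arguments are actually carried out, the rest being declared ``similar''. You instead encode feasibility via the functions $\phi_k,\psi_k$, exploit the shift identities $\phi_k(j+1)=\phi_{k-1}(j)+1$ and $\psi_k(j)=\psi_{k-1}(j)+1$ together with their monotonicity to reduce ``no size-$(k-1)$ split works'' to the two lower bounds in the third condition, and use the padding observation to justify comparing only sizes $k$ and $k-1$. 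This buys you a uniform mechanism that handles the interior range $4\le i\le k-4$ in one stroke, where the paper has to split it across several of its cases. Conversely, the paper's enumeration makes the degenerate splits $s_i\in\{0,1,2\}$ (and their mirrors) completely explicit, whereas in your argument this is exactly the ``boundary bookkeeping'' you flag as the obstacle: the breakdown of strict monotonicity at $j\in\{k-2,k-1,k\}$, the clamp in $f$, and the observation that a part contributing one or two vertices is never better than zero all have to be reconciled by hand. Both proofs ultimately leave that part as a careful-but-routine verification; your identification of $K_{1,1}$ and $K_{2,2}$ as the only casualties is correct.
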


\begin{example}
The strong geodetic numbers of small complete bipartite graphs can be found in Table~\ref{tbl:values2}.

	\begin{table}[!!h]
		\centering
		\begin{tabular}{|c||*{15}{c|}}\hline
			\backslashbox{$m$}{$n$}
			&1&2&3&4&5&6&7&8&9&10&11&12&13&14&15 \\\hline\hline
			1 & 2 & 2 & 3 & 4 & 5 & 6 & 7 & 8 & 9 & 10 & 11 & 12 & 13 & 14 & 15 \\\hline
 2 & 2 & 3 & 3 & 4 & 5 & 6 & 7 & 8 & 9 & 10 & 11 & 12 & 13 & 14 & 15 \\\hline
 3 & 3 & 3 & 3 & 4 & 5 & 6 & 7 & 8 & 9 & 10 & 11 & 12 & 13 & 14 & 15 \\\hline
 4 & 4 & 4 & 4 & 4 & 4 & 4 & 5 & 6 & 7 & 8 & 9 & 10 & 11 & 12 & 13 \\\hline
 5 & 5 & 5 & 5 & 4 & 5 & 5 & 5 & 5 & 5 & 5 & 6 & 7 & 8 & 9 & 10 \\\hline
 6 & 6 & 6 & 6 & 4 & 5 & 6 & 6 & 6 & 6 & 6 & 6 & 6 & 6 & 6 & 6 \\\hline
 7 & 7 & 7 & 7 & 5 & 5 & 6 & 7 & 7 & 7 & 7 & 7 & 7 & 7 & 7 & 7 \\\hline
 8 & 8 & 8 & 8 & 6 & 5 & 6 & 7 & 8 & 8 & 8 & 8 & 8 & 8 & 8 & 8 \\\hline
 9 & 9 & 9 & 9 & 7 & 5 & 6 & 7 & 8 & 8 & 8 & 9 & 9 & 9 & 9 & 9 \\\hline
 10 & 10 & 10 & 10 & 8 & 5 & 6 & 7 & 8 & 8 & 8 & 9 & 9 & 9 & 9 & 10 \\\hline
 11 & 11 & 11 & 11 & 9 & 6 & 6 & 7 & 8 & 9 & 9 & 9 & 9 & 9 & 9 & 10 \\\hline
 12 & 12 & 12 & 12 & 10 & 7 & 6 & 7 & 8 & 9 & 9 & 9 & 10 & 10 & 10 & 10 \\\hline
 13 & 13 & 13 & 13 & 11 & 8 & 6 & 7 & 8 & 9 & 9 & 9 & 10 & 10 & 10 & 10 \\\hline
 14 & 14 & 14 & 14 & 12 & 9 & 6 & 7 & 8 & 9 & 9 & 9 & 10 & 10 & 10 & 10 \\\hline
 15 & 15 & 15 & 15 & 13 & 10 & 6 & 7 & 8 & 9 & 10 & 10 & 10 & 10 & 10 & 10 \\\hline
		\end{tabular}
		\caption{The strong geodetic numbers $\sg(K_{n,m})$ for some small complete bipartite graphs.}
		\label{tbl:values2}
	\end{table}

Figure \ref{fig:k12} shows the positions of all 201 pairs $(n,m)$ for which $\sg(K_{n,m}) = 12$. We can notice the ``parabolas'' corresponding to $m = f(k, n)$ and $n = f(k, m)$, as well as the ``intersecting rectangles'' corresponding to $f(k, i-1) \leq m \leq f(k, i)$, $f(k, k-i-1) \leq n \leq f(k, k-i)$.

\begin{figure}[!!h]
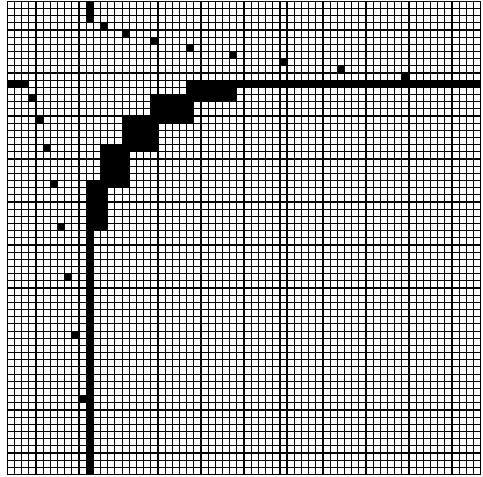

\begin{center}
	\scalebox{0.15}{\begin{ytableau}
	{} & {} & {} & {} & {} & {} & {} & {} & {} & {} & {} & *(black) & {} &
   {} & {} & {} & {} & {} & {} & {} & {} & {} & {} & {} & {} & {} &
   {} & {} & {} & {} & {} & {} & {} & {} & {} & {} & {} & {} & {} &
   {} & {} & {} & {} & {} & {} & {} & {} & {} & {} & {} & {} & {} &
   {} & {} & {} & {} & {} & {} & {} & {} & {} & {} & {} & {} & {} &
   {} \\
 {} & {} & {} & {} & {} & {} & {} & {} & {} & {} & {} & *(black) & {} &
   {} & {} & {} & {} & {} & {} & {} & {} & {} & {} & {} & {} & {} &
   {} & {} & {} & {} & {} & {} & {} & {} & {} & {} & {} & {} & {} &
   {} & {} & {} & {} & {} & {} & {} & {} & {} & {} & {} & {} & {} &
   {} & {} & {} & {} & {} & {} & {} & {} & {} & {} & {} & {} & {} &
   {} \\
 {} & {} & {} & {} & {} & {} & {} & {} & {} & {} & {} & *(black) & {} &
   {} & {} & {} & {} & {} & {} & {} & {} & {} & {} & {} & {} & {} &
   {} & {} & {} & {} & {} & {} & {} & {} & {} & {} & {} & {} & {} &
   {} & {} & {} & {} & {} & {} & {} & {} & {} & {} & {} & {} & {} &
   {} & {} & {} & {} & {} & {} & {} & {} & {} & {} & {} & {} & {} &
   {} \\
 {} & {} & {} & {} & {} & {} & {} & {} & {} & {} & {} & {} & {} &
   *(black) & {} & {} & {} & {} & {} & {} & {} & {} & {} & {} & {} & {} &
   {} & {} & {} & {} & {} & {} & {} & {} & {} & {} & {} & {} & {} &
   {} & {} & {} & {} & {} & {} & {} & {} & {} & {} & {} & {} & {} &
   {} & {} & {} & {} & {} & {} & {} & {} & {} & {} & {} & {} & {} &
   {} \\
 {} & {} & {} & {} & {} & {} & {} & {} & {} & {} & {} & {} & {} &
   {} & {} & {} & *(black) & {} & {} & {} & {} & {} & {} & {} & {} & {} &
   {} & {} & {} & {} & {} & {} & {} & {} & {} & {} & {} & {} & {} &
   {} & {} & {} & {} & {} & {} & {} & {} & {} & {} & {} & {} & {} &
   {} & {} & {} & {} & {} & {} & {} & {} & {} & {} & {} & {} & {} &
   {} \\
 {} & {} & {} & {} & {} & {} & {} & {} & {} & {} & {} & {} & {} &
   {} & {} & {} & {} & {} & {} & {} & *(black) & {} & {} & {} & {} & {} &
   {} & {} & {} & {} & {} & {} & {} & {} & {} & {} & {} & {} & {} &
   {} & {} & {} & {} & {} & {} & {} & {} & {} & {} & {} & {} & {} &
   {} & {} & {} & {} & {} & {} & {} & {} & {} & {} & {} & {} & {} &
   {} \\
 {} & {} & {} & {} & {} & {} & {} & {} & {} & {} & {} & {} & {} &
   {} & {} & {} & {} & {} & {} & {} & {} & {} & {} & {} & {} & *(black) &
   {} & {} & {} & {} & {} & {} & {} & {} & {} & {} & {} & {} & {} &
   {} & {} & {} & {} & {} & {} & {} & {} & {} & {} & {} & {} & {} &
   {} & {} & {} & {} & {} & {} & {} & {} & {} & {} & {} & {} & {} &
   {} \\
 {} & {} & {} & {} & {} & {} & {} & {} & {} & {} & {} & {} & {} &
   {} & {} & {} & {} & {} & {} & {} & {} & {} & {} & {} & {} & {} &
   {} & {} & {} & {} & {} & *(black) & {} & {} & {} & {} & {} & {} & {} &
   {} & {} & {} & {} & {} & {} & {} & {} & {} & {} & {} & {} & {} &
   {} & {} & {} & {} & {} & {} & {} & {} & {} & {} & {} & {} & {} &
   {} \\
 {} & {} & {} & {} & {} & {} & {} & {} & {} & {} & {} & {} & {} &
   {} & {} & {} & {} & {} & {} & {} & {} & {} & {} & {} & {} & {} &
   {} & {} & {} & {} & {} & {} & {} & {} & {} & {} & {} & {} & *(black) &
   {} & {} & {} & {} & {} & {} & {} & {} & {} & {} & {} & {} & {} &
   {} & {} & {} & {} & {} & {} & {} & {} & {} & {} & {} & {} & {} &
   {} \\
 {} & {} & {} & {} & {} & {} & {} & {} & {} & {} & {} & {} & {} &
   {} & {} & {} & {} & {} & {} & {} & {} & {} & {} & {} & {} & {} &
   {} & {} & {} & {} & {} & {} & {} & {} & {} & {} & {} & {} & {} &
   {} & {} & {} & {} & {} & {} & {} & *(black) & {} & {} & {} & {} & {} &
   {} & {} & {} & {} & {} & {} & {} & {} & {} & {} & {} & {} & {} &
   {} \\
 {} & {} & {} & {} & {} & {} & {} & {} & {} & {} & {} & {} & {} &
   {} & {} & {} & {} & {} & {} & {} & {} & {} & {} & {} & {} & {} &
   {} & {} & {} & {} & {} & {} & {} & {} & {} & {} & {} & {} & {} &
   {} & {} & {} & {} & {} & {} & {} & {} & {} & {} & {} & {} & {} &
   {} & {} & {} & *(black) & {} & {} & {} & {} & {} & {} & {} & {} & {} &
   {} \\
 *(black) & *(black) & *(black) & {} & {} & {} & {} & {} & {} & {} & {} & {} &
   {} & {} & {} & {} & {} & {} & {} & {} & {} & {} & {} & {} & {} &
   *(black) & *(black) & *(black) & *(black) & *(black) & *(black) & *(black) & *(black) & *(black) & *(black) &
   *(black) & *(black) & *(black) & *(black) & *(black) & *(black) & *(black) & *(black) & *(black) & *(black) &
   *(black) & *(black) & *(black) & *(black) & *(black) & *(black) & *(black) & *(black) & *(black) & *(black) &
   *(black) & *(black) & *(black) & *(black) & *(black) & *(black) & *(black) & *(black) & *(black) & *(black) &
   *(black) \\
 {} & {} & {} & {} & {} & {} & {} & {} & {} & {} & {} & {} & {} &
   {} & {} & {} & {} & {} & {} & {} & {} & {} & {} & {} & {} & *(black) &
   *(black) & *(black) & *(black) & *(black) & *(black) & *(black) & {} & {} & {} & {} & {} &
   {} & {} & {} & {} & {} & {} & {} & {} & {} & {} & {} & {} & {} &
   {} & {} & {} & {} & {} & {} & {} & {} & {} & {} & {} & {} & {} &
   {} & {} & {} \\
 {} & {} & {} & *(black) & {} & {} & {} & {} & {} & {} & {} & {} & {} &
   {} & {} & {} & {} & {} & {} & {} & *(black) & *(black) & *(black) & *(black) & *(black)
   & *(black) & *(black) & *(black) & *(black) & *(black) & *(black) & *(black) & {} & {} & {} & {} &
   {} & {} & {} & {} & {} & {} & {} & {} & {} & {} & {} & {} & {} &
   {} & {} & {} & {} & {} & {} & {} & {} & {} & {} & {} & {} & {} &
   {} & {} & {} & {} \\
 {} & {} & {} & {} & {} & {} & {} & {} & {} & {} & {} & {} & {} &
   {} & {} & {} & {} & {} & {} & {} & *(black) & *(black) & *(black) & *(black) & *(black)
   & *(black) & {} & {} & {} & {} & {} & {} & {} & {} & {} & {} & {} & {}
   & {} & {} & {} & {} & {} & {} & {} & {} & {} & {} & {} & {} & {} &
   {} & {} & {} & {} & {} & {} & {} & {} & {} & {} & {} & {} & {} &
   {} & {} \\
 {} & {} & {} & {} & {} & {} & {} & {} & {} & {} & {} & {} & {} &
   {} & {} & {} & {} & {} & {} & {} & *(black) & *(black) & *(black) & *(black) & *(black)
   & *(black) & {} & {} & {} & {} & {} & {} & {} & {} & {} & {} & {} & {}
   & {} & {} & {} & {} & {} & {} & {} & {} & {} & {} & {} & {} & {} &
   {} & {} & {} & {} & {} & {} & {} & {} & {} & {} & {} & {} & {} &
   {} & {} \\
 {} & {} & {} & {} & *(black) & {} & {} & {} & {} & {} & {} & {} & {} &
   {} & {} & {} & *(black) & *(black) & *(black) & *(black) & *(black) & *(black) & *(black) & *(black) &
   *(black) & *(black) & {} & {} & {} & {} & {} & {} & {} & {} & {} & {} &
   {} & {} & {} & {} & {} & {} & {} & {} & {} & {} & {} & {} & {} &
   {} & {} & {} & {} & {} & {} & {} & {} & {} & {} & {} & {} & {} &
   {} & {} & {} & {} \\
 {} & {} & {} & {} & {} & {} & {} & {} & {} & {} & {} & {} & {} &
   {} & {} & {} & *(black) & *(black) & *(black) & *(black) & *(black) & {} & {} & {} & {}
   & {} & {} & {} & {} & {} & {} & {} & {} & {} & {} & {} & {} & {} &
   {} & {} & {} & {} & {} & {} & {} & {} & {} & {} & {} & {} & {} &
   {} & {} & {} & {} & {} & {} & {} & {} & {} & {} & {} & {} & {} &
   {} & {} \\
 {} & {} & {} & {} & {} & {} & {} & {} & {} & {} & {} & {} & {} &
   {} & {} & {} & *(black) & *(black) & *(black) & *(black) & *(black) & {} & {} & {} & {}
   & {} & {} & {} & {} & {} & {} & {} & {} & {} & {} & {} & {} & {} &
   {} & {} & {} & {} & {} & {} & {} & {} & {} & {} & {} & {} & {} &
   {} & {} & {} & {} & {} & {} & {} & {} & {} & {} & {} & {} & {} &
   {} & {} \\
 {} & {} & {} & {} & {} & {} & {} & {} & {} & {} & {} & {} & {} &
   {} & {} & {} & *(black) & *(black) & *(black) & *(black) & *(black) & {} & {} & {} & {}
   & {} & {} & {} & {} & {} & {} & {} & {} & {} & {} & {} & {} & {} &
   {} & {} & {} & {} & {} & {} & {} & {} & {} & {} & {} & {} & {} &
   {} & {} & {} & {} & {} & {} & {} & {} & {} & {} & {} & {} & {} &
   {} & {} \\
 {} & {} & {} & {} & {} & *(black) & {} & {} & {} & {} & {} & {} & {} &
   *(black) & *(black) & *(black) & *(black) & *(black) & *(black) & *(black) & *(black) & {} & {} & {} &
   {} & {} & {} & {} & {} & {} & {} & {} & {} & {} & {} & {} & {} &
   {} & {} & {} & {} & {} & {} & {} & {} & {} & {} & {} & {} & {} &
   {} & {} & {} & {} & {} & {} & {} & {} & {} & {} & {} & {} & {} &
   {} & {} & {} \\
 {} & {} & {} & {} & {} & {} & {} & {} & {} & {} & {} & {} & {} &
   *(black) & *(black) & *(black) & *(black) & {} & {} & {} & {} & {} & {} & {} & {} &
   {} & {} & {} & {} & {} & {} & {} & {} & {} & {} & {} & {} & {} &
   {} & {} & {} & {} & {} & {} & {} & {} & {} & {} & {} & {} & {} &
   {} & {} & {} & {} & {} & {} & {} & {} & {} & {} & {} & {} & {} &
   {} & {} \\
 {} & {} & {} & {} & {} & {} & {} & {} & {} & {} & {} & {} & {} &
   *(black) & *(black) & *(black) & *(black) & {} & {} & {} & {} & {} & {} & {} & {} &
   {} & {} & {} & {} & {} & {} & {} & {} & {} & {} & {} & {} & {} &
   {} & {} & {} & {} & {} & {} & {} & {} & {} & {} & {} & {} & {} &
   {} & {} & {} & {} & {} & {} & {} & {} & {} & {} & {} & {} & {} &
   {} & {} \\
 {} & {} & {} & {} & {} & {} & {} & {} & {} & {} & {} & {} & {} &
   *(black) & *(black) & *(black) & *(black) & {} & {} & {} & {} & {} & {} & {} & {} &
   {} & {} & {} & {} & {} & {} & {} & {} & {} & {} & {} & {} & {} &
   {} & {} & {} & {} & {} & {} & {} & {} & {} & {} & {} & {} & {} &
   {} & {} & {} & {} & {} & {} & {} & {} & {} & {} & {} & {} & {} &
   {} & {} \\
 {} & {} & {} & {} & {} & {} & {} & {} & {} & {} & {} & {} & {} &
   *(black) & *(black) & *(black) & *(black) & {} & {} & {} & {} & {} & {} & {} & {} &
   {} & {} & {} & {} & {} & {} & {} & {} & {} & {} & {} & {} & {} &
   {} & {} & {} & {} & {} & {} & {} & {} & {} & {} & {} & {} & {} &
   {} & {} & {} & {} & {} & {} & {} & {} & {} & {} & {} & {} & {} &
   {} & {} \\
 {} & {} & {} & {} & {} & {} & *(black) & {} & {} & {} & {} & *(black) &
   *(black) & *(black) & *(black) & *(black) & *(black) & {} & {} & {} & {} & {} & {} & {}
   & {} & {} & {} & {} & {} & {} & {} & {} & {} & {} & {} & {} & {} &
   {} & {} & {} & {} & {} & {} & {} & {} & {} & {} & {} & {} & {} &
   {} & {} & {} & {} & {} & {} & {} & {} & {} & {} & {} & {} & {} &
   {} & {} & {} \\
 {} & {} & {} & {} & {} & {} & {} & {} & {} & {} & {} & *(black) & *(black)
   & *(black) & {} & {} & {} & {} & {} & {} & {} & {} & {} & {} & {} & {}
   & {} & {} & {} & {} & {} & {} & {} & {} & {} & {} & {} & {} & {} &
   {} & {} & {} & {} & {} & {} & {} & {} & {} & {} & {} & {} & {} &
   {} & {} & {} & {} & {} & {} & {} & {} & {} & {} & {} & {} & {} &
   {} \\
 {} & {} & {} & {} & {} & {} & {} & {} & {} & {} & {} & *(black) & *(black)
   & *(black) & {} & {} & {} & {} & {} & {} & {} & {} & {} & {} & {} & {}
   & {} & {} & {} & {} & {} & {} & {} & {} & {} & {} & {} & {} & {} &
   {} & {} & {} & {} & {} & {} & {} & {} & {} & {} & {} & {} & {} &
   {} & {} & {} & {} & {} & {} & {} & {} & {} & {} & {} & {} & {} &
   {} \\
 {} & {} & {} & {} & {} & {} & {} & {} & {} & {} & {} & *(black) & *(black)
   & *(black) & {} & {} & {} & {} & {} & {} & {} & {} & {} & {} & {} & {}
   & {} & {} & {} & {} & {} & {} & {} & {} & {} & {} & {} & {} & {} &
   {} & {} & {} & {} & {} & {} & {} & {} & {} & {} & {} & {} & {} &
   {} & {} & {} & {} & {} & {} & {} & {} & {} & {} & {} & {} & {} &
   {} \\
 {} & {} & {} & {} & {} & {} & {} & {} & {} & {} & {} & *(black) & *(black)
   & *(black) & {} & {} & {} & {} & {} & {} & {} & {} & {} & {} & {} & {}
   & {} & {} & {} & {} & {} & {} & {} & {} & {} & {} & {} & {} & {} &
   {} & {} & {} & {} & {} & {} & {} & {} & {} & {} & {} & {} & {} &
   {} & {} & {} & {} & {} & {} & {} & {} & {} & {} & {} & {} & {} &
   {} \\
 {} & {} & {} & {} & {} & {} & {} & {} & {} & {} & {} & *(black) & *(black)
   & *(black) & {} & {} & {} & {} & {} & {} & {} & {} & {} & {} & {} & {}
   & {} & {} & {} & {} & {} & {} & {} & {} & {} & {} & {} & {} & {} &
   {} & {} & {} & {} & {} & {} & {} & {} & {} & {} & {} & {} & {} &
   {} & {} & {} & {} & {} & {} & {} & {} & {} & {} & {} & {} & {} &
   {} \\
 {} & {} & {} & {} & {} & {} & {} & *(black) & {} & {} & {} & *(black) &
   *(black) & *(black) & {} & {} & {} & {} & {} & {} & {} & {} & {} & {} &
   {} & {} & {} & {} & {} & {} & {} & {} & {} & {} & {} & {} & {} &
   {} & {} & {} & {} & {} & {} & {} & {} & {} & {} & {} & {} & {} &
   {} & {} & {} & {} & {} & {} & {} & {} & {} & {} & {} & {} & {} &
   {} & {} & {} \\
 {} & {} & {} & {} & {} & {} & {} & {} & {} & {} & {} & *(black) & {} &
   {} & {} & {} & {} & {} & {} & {} & {} & {} & {} & {} & {} & {} &
   {} & {} & {} & {} & {} & {} & {} & {} & {} & {} & {} & {} & {} &
   {} & {} & {} & {} & {} & {} & {} & {} & {} & {} & {} & {} & {} &
   {} & {} & {} & {} & {} & {} & {} & {} & {} & {} & {} & {} & {} &
   {} \\
 {} & {} & {} & {} & {} & {} & {} & {} & {} & {} & {} & *(black) & {} &
   {} & {} & {} & {} & {} & {} & {} & {} & {} & {} & {} & {} & {} &
   {} & {} & {} & {} & {} & {} & {} & {} & {} & {} & {} & {} & {} &
   {} & {} & {} & {} & {} & {} & {} & {} & {} & {} & {} & {} & {} &
   {} & {} & {} & {} & {} & {} & {} & {} & {} & {} & {} & {} & {} &
   {} \\
 {} & {} & {} & {} & {} & {} & {} & {} & {} & {} & {} & *(black) & {} &
   {} & {} & {} & {} & {} & {} & {} & {} & {} & {} & {} & {} & {} &
   {} & {} & {} & {} & {} & {} & {} & {} & {} & {} & {} & {} & {} &
   {} & {} & {} & {} & {} & {} & {} & {} & {} & {} & {} & {} & {} &
   {} & {} & {} & {} & {} & {} & {} & {} & {} & {} & {} & {} & {} &
   {} \\
 {} & {} & {} & {} & {} & {} & {} & {} & {} & {} & {} & *(black) & {} &
   {} & {} & {} & {} & {} & {} & {} & {} & {} & {} & {} & {} & {} &
   {} & {} & {} & {} & {} & {} & {} & {} & {} & {} & {} & {} & {} &
   {} & {} & {} & {} & {} & {} & {} & {} & {} & {} & {} & {} & {} &
   {} & {} & {} & {} & {} & {} & {} & {} & {} & {} & {} & {} & {} &
   {} \\
 {} & {} & {} & {} & {} & {} & {} & {} & {} & {} & {} & *(black) & {} &
   {} & {} & {} & {} & {} & {} & {} & {} & {} & {} & {} & {} & {} &
   {} & {} & {} & {} & {} & {} & {} & {} & {} & {} & {} & {} & {} &
   {} & {} & {} & {} & {} & {} & {} & {} & {} & {} & {} & {} & {} &
   {} & {} & {} & {} & {} & {} & {} & {} & {} & {} & {} & {} & {} &
   {} \\
 {} & {} & {} & {} & {} & {} & {} & {} & {} & {} & {} & *(black) & {} &
   {} & {} & {} & {} & {} & {} & {} & {} & {} & {} & {} & {} & {} &
   {} & {} & {} & {} & {} & {} & {} & {} & {} & {} & {} & {} & {} &
   {} & {} & {} & {} & {} & {} & {} & {} & {} & {} & {} & {} & {} &
   {} & {} & {} & {} & {} & {} & {} & {} & {} & {} & {} & {} & {} &
   {} \\
 {} & {} & {} & {} & {} & {} & {} & {} & *(black) & {} & {} & *(black) & {}
   & {} & {} & {} & {} & {} & {} & {} & {} & {} & {} & {} & {} & {} &
   {} & {} & {} & {} & {} & {} & {} & {} & {} & {} & {} & {} & {} &
   {} & {} & {} & {} & {} & {} & {} & {} & {} & {} & {} & {} & {} &
   {} & {} & {} & {} & {} & {} & {} & {} & {} & {} & {} & {} & {} &
   {} \\
 {} & {} & {} & {} & {} & {} & {} & {} & {} & {} & {} & *(black) & {} &
   {} & {} & {} & {} & {} & {} & {} & {} & {} & {} & {} & {} & {} &
   {} & {} & {} & {} & {} & {} & {} & {} & {} & {} & {} & {} & {} &
   {} & {} & {} & {} & {} & {} & {} & {} & {} & {} & {} & {} & {} &
   {} & {} & {} & {} & {} & {} & {} & {} & {} & {} & {} & {} & {} &
   {} \\
 {} & {} & {} & {} & {} & {} & {} & {} & {} & {} & {} & *(black) & {} &
   {} & {} & {} & {} & {} & {} & {} & {} & {} & {} & {} & {} & {} &
   {} & {} & {} & {} & {} & {} & {} & {} & {} & {} & {} & {} & {} &
   {} & {} & {} & {} & {} & {} & {} & {} & {} & {} & {} & {} & {} &
   {} & {} & {} & {} & {} & {} & {} & {} & {} & {} & {} & {} & {} &
   {} \\
 {} & {} & {} & {} & {} & {} & {} & {} & {} & {} & {} & *(black) & {} &
   {} & {} & {} & {} & {} & {} & {} & {} & {} & {} & {} & {} & {} &
   {} & {} & {} & {} & {} & {} & {} & {} & {} & {} & {} & {} & {} &
   {} & {} & {} & {} & {} & {} & {} & {} & {} & {} & {} & {} & {} &
   {} & {} & {} & {} & {} & {} & {} & {} & {} & {} & {} & {} & {} &
   {} \\
 {} & {} & {} & {} & {} & {} & {} & {} & {} & {} & {} & *(black) & {} &
   {} & {} & {} & {} & {} & {} & {} & {} & {} & {} & {} & {} & {} &
   {} & {} & {} & {} & {} & {} & {} & {} & {} & {} & {} & {} & {} &
   {} & {} & {} & {} & {} & {} & {} & {} & {} & {} & {} & {} & {} &
   {} & {} & {} & {} & {} & {} & {} & {} & {} & {} & {} & {} & {} &
   {} \\
 {} & {} & {} & {} & {} & {} & {} & {} & {} & {} & {} & *(black) & {} &
   {} & {} & {} & {} & {} & {} & {} & {} & {} & {} & {} & {} & {} &
   {} & {} & {} & {} & {} & {} & {} & {} & {} & {} & {} & {} & {} &
   {} & {} & {} & {} & {} & {} & {} & {} & {} & {} & {} & {} & {} &
   {} & {} & {} & {} & {} & {} & {} & {} & {} & {} & {} & {} & {} &
   {} \\
 {} & {} & {} & {} & {} & {} & {} & {} & {} & {} & {} & *(black) & {} &
   {} & {} & {} & {} & {} & {} & {} & {} & {} & {} & {} & {} & {} &
   {} & {} & {} & {} & {} & {} & {} & {} & {} & {} & {} & {} & {} &
   {} & {} & {} & {} & {} & {} & {} & {} & {} & {} & {} & {} & {} &
   {} & {} & {} & {} & {} & {} & {} & {} & {} & {} & {} & {} & {} &
   {} \\
 {} & {} & {} & {} & {} & {} & {} & {} & {} & {} & {} & *(black) & {} &
   {} & {} & {} & {} & {} & {} & {} & {} & {} & {} & {} & {} & {} &
   {} & {} & {} & {} & {} & {} & {} & {} & {} & {} & {} & {} & {} &
   {} & {} & {} & {} & {} & {} & {} & {} & {} & {} & {} & {} & {} &
   {} & {} & {} & {} & {} & {} & {} & {} & {} & {} & {} & {} & {} &
   {} \\
 {} & {} & {} & {} & {} & {} & {} & {} & {} & *(black) & {} & *(black) & {}
   & {} & {} & {} & {} & {} & {} & {} & {} & {} & {} & {} & {} & {} &
   {} & {} & {} & {} & {} & {} & {} & {} & {} & {} & {} & {} & {} &
   {} & {} & {} & {} & {} & {} & {} & {} & {} & {} & {} & {} & {} &
   {} & {} & {} & {} & {} & {} & {} & {} & {} & {} & {} & {} & {} &
   {} \\
 {} & {} & {} & {} & {} & {} & {} & {} & {} & {} & {} & *(black) & {} &
   {} & {} & {} & {} & {} & {} & {} & {} & {} & {} & {} & {} & {} &
   {} & {} & {} & {} & {} & {} & {} & {} & {} & {} & {} & {} & {} &
   {} & {} & {} & {} & {} & {} & {} & {} & {} & {} & {} & {} & {} &
   {} & {} & {} & {} & {} & {} & {} & {} & {} & {} & {} & {} & {} &
   {} \\
 {} & {} & {} & {} & {} & {} & {} & {} & {} & {} & {} & *(black) & {} &
   {} & {} & {} & {} & {} & {} & {} & {} & {} & {} & {} & {} & {} &
   {} & {} & {} & {} & {} & {} & {} & {} & {} & {} & {} & {} & {} &
   {} & {} & {} & {} & {} & {} & {} & {} & {} & {} & {} & {} & {} &
   {} & {} & {} & {} & {} & {} & {} & {} & {} & {} & {} & {} & {} &
   {} \\
 {} & {} & {} & {} & {} & {} & {} & {} & {} & {} & {} & *(black) & {} &
   {} & {} & {} & {} & {} & {} & {} & {} & {} & {} & {} & {} & {} &
   {} & {} & {} & {} & {} & {} & {} & {} & {} & {} & {} & {} & {} &
   {} & {} & {} & {} & {} & {} & {} & {} & {} & {} & {} & {} & {} &
   {} & {} & {} & {} & {} & {} & {} & {} & {} & {} & {} & {} & {} &
   {} \\
 {} & {} & {} & {} & {} & {} & {} & {} & {} & {} & {} & *(black) & {} &
   {} & {} & {} & {} & {} & {} & {} & {} & {} & {} & {} & {} & {} &
   {} & {} & {} & {} & {} & {} & {} & {} & {} & {} & {} & {} & {} &
   {} & {} & {} & {} & {} & {} & {} & {} & {} & {} & {} & {} & {} &
   {} & {} & {} & {} & {} & {} & {} & {} & {} & {} & {} & {} & {} &
   {} \\
 {} & {} & {} & {} & {} & {} & {} & {} & {} & {} & {} & *(black) & {} &
   {} & {} & {} & {} & {} & {} & {} & {} & {} & {} & {} & {} & {} &
   {} & {} & {} & {} & {} & {} & {} & {} & {} & {} & {} & {} & {} &
   {} & {} & {} & {} & {} & {} & {} & {} & {} & {} & {} & {} & {} &
   {} & {} & {} & {} & {} & {} & {} & {} & {} & {} & {} & {} & {} &
   {} \\
 {} & {} & {} & {} & {} & {} & {} & {} & {} & {} & {} & *(black) & {} &
   {} & {} & {} & {} & {} & {} & {} & {} & {} & {} & {} & {} & {} &
   {} & {} & {} & {} & {} & {} & {} & {} & {} & {} & {} & {} & {} &
   {} & {} & {} & {} & {} & {} & {} & {} & {} & {} & {} & {} & {} &
   {} & {} & {} & {} & {} & {} & {} & {} & {} & {} & {} & {} & {} &
   {} \\
 {} & {} & {} & {} & {} & {} & {} & {} & {} & {} & {} & *(black) & {} &
   {} & {} & {} & {} & {} & {} & {} & {} & {} & {} & {} & {} & {} &
   {} & {} & {} & {} & {} & {} & {} & {} & {} & {} & {} & {} & {} &
   {} & {} & {} & {} & {} & {} & {} & {} & {} & {} & {} & {} & {} &
   {} & {} & {} & {} & {} & {} & {} & {} & {} & {} & {} & {} & {} &
   {} \\
 {} & {} & {} & {} & {} & {} & {} & {} & {} & {} & {} & *(black) & {} &
   {} & {} & {} & {} & {} & {} & {} & {} & {} & {} & {} & {} & {} &
   {} & {} & {} & {} & {} & {} & {} & {} & {} & {} & {} & {} & {} &
   {} & {} & {} & {} & {} & {} & {} & {} & {} & {} & {} & {} & {} &
   {} & {} & {} & {} & {} & {} & {} & {} & {} & {} & {} & {} & {} &
   {} \\
 {} & {} & {} & {} & {} & {} & {} & {} & {} & {} & *(black) & *(black) & {}
   & {} & {} & {} & {} & {} & {} & {} & {} & {} & {} & {} & {} & {} &
   {} & {} & {} & {} & {} & {} & {} & {} & {} & {} & {} & {} & {} &
   {} & {} & {} & {} & {} & {} & {} & {} & {} & {} & {} & {} & {} &
   {} & {} & {} & {} & {} & {} & {} & {} & {} & {} & {} & {} & {} &
   {} \\
 {} & {} & {} & {} & {} & {} & {} & {} & {} & {} & {} & *(black) & {} &
   {} & {} & {} & {} & {} & {} & {} & {} & {} & {} & {} & {} & {} &
   {} & {} & {} & {} & {} & {} & {} & {} & {} & {} & {} & {} & {} &
   {} & {} & {} & {} & {} & {} & {} & {} & {} & {} & {} & {} & {} &
   {} & {} & {} & {} & {} & {} & {} & {} & {} & {} & {} & {} & {} &
   {} \\
 {} & {} & {} & {} & {} & {} & {} & {} & {} & {} & {} & *(black) & {} &
   {} & {} & {} & {} & {} & {} & {} & {} & {} & {} & {} & {} & {} &
   {} & {} & {} & {} & {} & {} & {} & {} & {} & {} & {} & {} & {} &
   {} & {} & {} & {} & {} & {} & {} & {} & {} & {} & {} & {} & {} &
   {} & {} & {} & {} & {} & {} & {} & {} & {} & {} & {} & {} & {} &
   {} \\
 {} & {} & {} & {} & {} & {} & {} & {} & {} & {} & {} & *(black) & {} &
   {} & {} & {} & {} & {} & {} & {} & {} & {} & {} & {} & {} & {} &
   {} & {} & {} & {} & {} & {} & {} & {} & {} & {} & {} & {} & {} &
   {} & {} & {} & {} & {} & {} & {} & {} & {} & {} & {} & {} & {} &
   {} & {} & {} & {} & {} & {} & {} & {} & {} & {} & {} & {} & {} &
   {} \\
 {} & {} & {} & {} & {} & {} & {} & {} & {} & {} & {} & *(black) & {} &
   {} & {} & {} & {} & {} & {} & {} & {} & {} & {} & {} & {} & {} &
   {} & {} & {} & {} & {} & {} & {} & {} & {} & {} & {} & {} & {} &
   {} & {} & {} & {} & {} & {} & {} & {} & {} & {} & {} & {} & {} &
   {} & {} & {} & {} & {} & {} & {} & {} & {} & {} & {} & {} & {} &
   {} \\
 {} & {} & {} & {} & {} & {} & {} & {} & {} & {} & {} & *(black) & {} &
   {} & {} & {} & {} & {} & {} & {} & {} & {} & {} & {} & {} & {} &
   {} & {} & {} & {} & {} & {} & {} & {} & {} & {} & {} & {} & {} &
   {} & {} & {} & {} & {} & {} & {} & {} & {} & {} & {} & {} & {} &
   {} & {} & {} & {} & {} & {} & {} & {} & {} & {} & {} & {} & {} &
   {} \\
 {} & {} & {} & {} & {} & {} & {} & {} & {} & {} & {} & *(black) & {} &
   {} & {} & {} & {} & {} & {} & {} & {} & {} & {} & {} & {} & {} &
   {} & {} & {} & {} & {} & {} & {} & {} & {} & {} & {} & {} & {} &
   {} & {} & {} & {} & {} & {} & {} & {} & {} & {} & {} & {} & {} &
   {} & {} & {} & {} & {} & {} & {} & {} & {} & {} & {} & {} & {} &
   {} \\
 {} & {} & {} & {} & {} & {} & {} & {} & {} & {} & {} & *(black) & {} &
   {} & {} & {} & {} & {} & {} & {} & {} & {} & {} & {} & {} & {} &
   {} & {} & {} & {} & {} & {} & {} & {} & {} & {} & {} & {} & {} &
   {} & {} & {} & {} & {} & {} & {} & {} & {} & {} & {} & {} & {} &
   {} & {} & {} & {} & {} & {} & {} & {} & {} & {} & {} & {} & {} &
   {} \\
 {} & {} & {} & {} & {} & {} & {} & {} & {} & {} & {} & *(black) & {} &
   {} & {} & {} & {} & {} & {} & {} & {} & {} & {} & {} & {} & {} &
   {} & {} & {} & {} & {} & {} & {} & {} & {} & {} & {} & {} & {} &
   {} & {} & {} & {} & {} & {} & {} & {} & {} & {} & {} & {} & {} &
   {} & {} & {} & {} & {} & {} & {} & {} & {} & {} & {} & {} & {} &
   {} \\
 {} & {} & {} & {} & {} & {} & {} & {} & {} & {} & {} & *(black) & {} &
   {} & {} & {} & {} & {} & {} & {} & {} & {} & {} & {} & {} & {} &
   {} & {} & {} & {} & {} & {} & {} & {} & {} & {} & {} & {} & {} &
   {} & {} & {} & {} & {} & {} & {} & {} & {} & {} & {} & {} & {} &
   {} & {} & {} & {} & {} & {} & {} & {} & {} & {} & {} & {} & {} &
   {} \\
 {} & {} & {} & {} & {} & {} & {} & {} & {} & {} & {} & *(black) & {} &
   {} & {} & {} & {} & {} & {} & {} & {} & {} & {} & {} & {} & {} &
   {} & {} & {} & {} & {} & {} & {} & {} & {} & {} & {} & {} & {} &
   {} & {} & {} & {} & {} & {} & {} & {} & {} & {} & {} & {} & {} &
   {} & {} & {} & {} & {} & {} & {} & {} & {} & {} & {} & {} & {} &
   {} \\
 \end{ytableau}}
\end{center}
\caption{All pairs $(n,m)$ for which $\sg(K_{n,m}) = 12$.}  \label{fig:k12}
\end{figure}
\end{example}

\proof[Proof of Theorem~\ref{thm:sg=k}]
It is not difficult to see that $\sg(K_{n,m}) = 2$ if and only if $(n,m) \in \{ (1,1), (1,2), (2,1) \}$, and that $\sg(K_{2,2}) = 3$. So assume that $k \geq 3$ and $\max\{n,m\} \geq 3$.

The statement follows from the following (note that the sum $s_1 + s_2$ equals $k$ for every $(s_1, s_2)$ that appears below):
\begin{enumerate}
\setlength{\itemsep}{0pt}
	\item If $n \leq 3$ and $m = f(k, n) = k$,  then $(0,k)$ is an optimal solution.
	\item If $3 \leq n < k$ and $m = f(k, n)$,  then $(n, k-n)$ is an optimal solution.
	\item If $m \leq 3$ and $n = f(k, m) = k$,  then $(k,0)$ is an optimal solution.
	\item If $3 \leq m < k$ and $n = f(k, m)$,  then $(k-m, m)$ is an optimal solution.
	\item If $f(k, i-1) < m \leq f(k,i)$ and $f(k, k-i-1) < n \leq f(k, k-i)$ for $4 \leq i \leq k-4$, then $(i, k-i)$ is an optimal solution.
	\item If $f(k, i-1) < m \leq f(k,i)$ and $n = f(k, k-i-1)$ for $i \leq k-3$, then $(i, k-i)$ is an optimal solution.
	\item If $f(k, i-1) < m \leq f(k,i)$ and $n = f(k, k-i-1)$ for $i \leq k-4$, then $(i+1, k-i-1)$ is an optimal solution.
	\item If $f(k, i-1) < m \leq f(k,i)$ and $n = f(k, k-i-1)$ for $i \geq k-4$, then $(k,0)$ is an optimal solution.
	\item If $m =f(k, i-1)$ and $f(k, k-i-1) < n \leq f(k,k-i)$ for $i \geq 3$, then $(i, k-i)$ is an optimal solution.
	\item If $m =f(k, i-1)$ and $f(k, k-i-1) < n \leq f(k,k-i)$ for $i \geq 4$, then $(i-1,k-i+1)$ is an optimal solution.
	\item If $m =f(k, i-1)$ and $f(k, k-i-1) < n \leq f(k,k-i)$ for $i \leq 4$, then $(0,k)$ is an optimal solution.
	\item If $m = f(k,i-1)$ and $n = f(k, k-i-1)$ for $i \leq 4$, then $(0,k)$ is an optimal solution.
	\item If $m = f(k,i-1)$ and $n = f(k, k-i-1)$ for $2 \leq i \leq k-4$, then $(i+1, k-i-1)$ is an optimal solution.
	\item If $m = f(k,i-1)$ and $n = f(k, k-i-1)$ for $3 \leq i \leq k-3$, then $(i, k-i)$ is an optimal solution.
	\item If $m = f(k,i-1)$ and $n = f(k, k-i-1)$ for $4 \leq i \leq k-2$, then $(i-1, k-i+1)$ is an optimal solution.
	\item If $m = f(k,i-1)$ and $n = f(k, k-i-1)$ for $i \geq k-4$, then $(k,0)$ is an optimal solution.
\end{enumerate}

It is easy to see that the above solutions give rise to the strong geodetic sets of size $k$. For example, in the first case, the part of the bipartition of size $m$ is a strong geodetic set with parameters $(0,k)$. What remains to be proved is $\sg(K_{n,m}) \geq k$ for each case. This can be shown by a simple case analysis. As the reasoning is similar in all cases, we demonstrate only two of them. Let $X$ be the part of the bipartition of size $n$ and $Y$ the part of size $m$. Also, let $S = S_1 \cup S_2$, where $S_1 \subseteq X$, $S_2 \subseteq Y$, be some strong geodetic set.

\begin{enumerate}
	\item[2.] In this case we have $k > n \geq 3$ and $m = k-1 + \binom{n-1}{2} = k - n + \binom{n}{2}$. If $S_1 = X$, then geodesics between these vertices cover at most $\binom{n}{2}$ vertices in $Y$, so at least $k-n$ vertices in $Y$ must also lie in a strong geodetic set. Hence, $|S| \geq n - (k-n) = k$.

	If $S_1 \neq X$, geodesics between these vertices cover at most $\binom{n-1}{2}$ vertices in $Y$, so at least $k-1$ vertices from $Y$ must lie in a strong geodetic set. Hence, $|S| \geq |S_1| + (k-1)$. If $S_1 \neq \emptyset$ or $|S_2| \geq k$, we have $|S| \geq k$. Otherwise, $S = S_2$ and contains exactly $k-1$ vertices. But then the remaining vertices in $Y$ are not covered.

	\item[5.] We can write
	$$n = k-1 + \binom{k-i-2}{2} + l,\ l \in \{1, \ldots, k-i-2\}\, ,$$
	$$m = k-1 + \binom{i-2}{2} + j,\ j \in \{1, \ldots, i-2\}\, .$$
	Suppose $|S| \leq k-1$.
	If $|S_1| \leq i-2$, these vertices cover at most $\binom{i-2}{2}$  vertices in $X$, thus at least $k$ vertices remain uncovered and $|S| \geq k$. Hence, $|S_1| \geq i-1$. Similarly, $|S_2| \geq k-i-1$.

	If $|S_1| = i-1$, then $\binom{i-1}{2}$ vertices in $Y$ are covered. As $k + j - i+1$ are left uncovered, it holds that $|S_2| \geq k-i+2$ and thus $|S| \geq k+1$.

	If $|S_2| = k-i-1$, then $\binom{k-i-1}{2}$ vertices in $X$ are covered. As $l+i+1$ are left uncovered, it holds that $|S_1| \geq i+2$ and thus $|S| \geq k+1$.

	Hence $|S_1| \geq i$ and $|S_2| \geq k-i$ and thus $|S| \geq k$.
\end{enumerate}

Note that all different optimal solutions are described above, hence some of the conditions could be merged.
\qed

The first condition from Theorem~\ref{thm:sg=k} can be simplified as follows.

\begin{corollary}
	\label{cor:bipartite}
	If $n \geq 3$ and $m > \binom{n}{2}$, then $\sg(K_{n,m}) = m + 1 - \binom{n-1}{2}$. If $n \leq 3$ and $m > n$, then $\sg(K_{n,m}) = m$.
\end{corollary}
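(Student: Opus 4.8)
The plan is to read the corollary directly off the first family of triples in Theorem~\ref{thm:sg=k}, namely the alternative ``$n<k$ and $m=f(k,n)$'' (together with its mirror image ``$m<k$ and $n=f(k,m)$''). In each of the two regimes I would simply produce the candidate value of $k$, verify that the triple $(n,m,k)$ meets that first condition, check that it is not one of the two exceptional pairs, and then quote Theorem~\ref{thm:sg=k}.

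First I would handle $n\ge 3$ and $m>\binom{n}{2}$. Set $k:=m+1-\binom{n-1}{2}$. Using the identity $\binom{n}{2}-\binom{n-1}{2}=n-1$, the hypothesis $m>\binom{n}{2}$ gives $k=m+1-\binom{n-1}{2}>\binom{n}{2}+1-\binom{n-1}{2}=n$, so $k$ is a positive integer with $k>n\ge 3$ and in particular $(n,m)\notin\{(1,1),(2,2)\}$. Since $n\ge 3$ we have $\max\{n-1,2\}=n-1$, hence $f(k,n)=k-1+\binom{n-1}{2}=m$ by the choice of $k$. Thus $(n,m,k)$ satisfies the first condition of Theorem~\ref{thm:sg=k} and is not exceptional, so $\sg(K_{n,m})=k=m+1-\binom{n-1}{2}$.

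For $n\le 3$ and $m>n$ I would instead set $k:=m$. For every $n\in\{1,2,3\}$ one has $\max\{n-1,2\}=2$, so $f(k,n)=k-1+\binom{2}{2}=k=m$; moreover $n<k=m$, and since $m>n$ the pair $(n,m)$ is neither $(1,1)$ nor $(2,2)$. Hence $(n,m,m)$ satisfies the first condition of Theorem~\ref{thm:sg=k}, giving $\sg(K_{n,m})=m$. The small instances $(1,2)$ and $(2,3)$ are covered by this same computation, and the values $\sg(K_{1,2})=2$ and $\sg(K_{2,3})=3$ recorded at the start of the proof of Theorem~\ref{thm:sg=k} are consistent with $k=m$.

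There is essentially no obstacle: the whole argument is a substitution into the closed form $f$. The only points that require a moment's care are the binomial identity $\binom{n}{2}-\binom{n-1}{2}=n-1$, used to see $k>n$, and the clamp $\max\{\beta-1,2\}$ in the definition of $f$, which is active exactly when $n\le 3$; this is why the threshold degenerates from $m>\binom{n}{2}$ to $m>n$ in that regime, and why $m>n$ (rather than merely $m\ge 1$) is needed to steer clear of the exceptional graphs $K_{1,1}$ and $K_{2,2}$.
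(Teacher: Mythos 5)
Your proposal is correct and matches the paper's intent exactly: the corollary is stated as a direct simplification of the first alternative of Theorem~\ref{thm:sg=k}, and your substitution of $k=m+1-\binom{n-1}{2}$ (resp.\ $k=m$) into $m=f(k,n)$ with the check $n<k$ is precisely that reading. The verification of the clamp $\max\{\beta-1,2\}$ and the exclusion of the exceptional pairs $(1,1)$ and $(2,2)$ are handled correctly.
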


When $m \leq \binom n 2$, Theorem~\ref{thm:sg=k} is harder to apply. Note, however, that the theorem suggests that $m$ is approximately equal to $k-1+\binom{i-1}2$, and $n$ is approximately equal to $k-1+\binom{k-i-1}2$. Furthermore, note that we can rewrite the system of equations (with known $m,n$ and variables $k,i$) $m = k-1+\binom{i-1}2$, $n = k-1+\binom{k-i-1}2$ as a polynomial equation of degree $4$ for $k$ (say by subtracting the two equations, solving for $i$, and plugging the result into one of the equations), and solve it explicitly. It seems that one of the four solutions is always very close to $\sg(K_{m,n})$. Denote the minimal distance between $\sg(K_{m,n})$ and a solution $k$ of $m = k-1+\binom{i-1}2$, $n = k-1+\binom{k-i-1}2$ by $e(m,n)$. Then our data indicates the following:

\begin{table}[!!h]
	\centering
	\begin{tabular}{|c||*{15}{c|}}\hline
		$n$ & 10 & 100 & 1000 & 10000 & 100000 \\ \hline
		$\max\{e(m,n) \colon n \leq m \leq \binom n 2\}$ & $1.094$ & $1.774$ & $1.941$ & $1.983$ &  $1.995$ \\ \hline
	\end{tabular}
\end{table}

\medskip

We conjecture the following.

\begin{conjecture}
 If $n \leq m \leq \binom n 2$, then $e(m,n) < 2$.
\end{conjecture}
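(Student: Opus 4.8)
The plan is to recast the problem as a statement about the real solutions of the system
\[
m = k - 1 + \binom{i-1}{2}, \qquad n = k - 1 + \binom{k-i-1}{2},
\]
where $\binom{x}{2}$ is read as the polynomial $x(x-1)/2$ and $k,i$ are real. Eliminating $i$ exactly as suggested in the text (subtract to get $m-n=\tfrac12(2i-k)(k-3)$, so $i=\tfrac{k}{2}+\tfrac{m-n}{k-3}$, and substitute into the sum) turns this into $g(k)=m+n$ with $g(k)=\tfrac{3k}{2}-1+\tfrac{(k-2)^2}{4}+\tfrac{(m-n)^2}{(k-3)^2}$, and conversely every real $k\neq 3$ with $g(k)=m+n$ arises from such a pair. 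Set $K=\sg(K_{m,n})$, assume $m\geq n$ by symmetry, and note that $n=3$ forces $m=3$, so we may take $n\geq 4$. It then suffices to produce a single solution $(k^\ast,i^\ast)$ of the system with $|k^\ast-K|<2$, for then $e(m,n)\leq|k^\ast-K|<2$.

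First I would locate $K$ via Theorem~\ref{thm:sg=k}. Since $n\leq m\leq\binom{n}{2}$, neither ``parabola'' alternative of that theorem can hold: the first would give $m=f(K,n)=K-1+\binom{n-1}{2}>(n-1)+\binom{n-1}{2}=\binom{n}{2}$, and the second $n=f(K,m)=K-1+\binom{m-1}{2}>m$ (using $m\geq 4$), both impossible. Hence $K$ lies in the ``intersecting rectangles'' case, so there is an integer $i$, $0\leq i\leq K$, with $f(K,i-1)\leq m\leq f(K,i)$ and $f(K,K-i-1)\leq n\leq f(K,K-i)$. Using $f(K,\beta)=K$ for $\beta\leq 3$ and $f(K,\beta)=K-1+\binom{\beta-1}{2}$ for $\beta\geq 3$, one checks---possibly after replacing $i$ by another admissible index, which is harmless apart from finitely many small $K$---that $i$ can be taken with $2\leq i\leq K$. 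Put $\delta_1=f(K,i)-m$ and $\delta_2=f(K,K-i)-n$; then $0\leq\delta_1\leq i-2$, and if moreover $i\leq K-2$ then $0\leq\delta_2\leq K-i-2$.

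The core step is a perturbation about $(K,i)$ in the case $2\leq i\leq K-2$. Write $k=K+\Delta$, $i^\ast=i+\varepsilon$, $\eta=\Delta-\varepsilon$, and use the identity $\binom{x+t}{2}-\binom{x}{2}=t(x-\tfrac12)+\tfrac{t^2}{2}$ together with $K-1+\binom{i-1}{2}=m+\delta_1$ and $K-1+\binom{K-i-1}{2}=n+\delta_2$; the system becomes
\[
\Delta+\varepsilon p+\tfrac{\varepsilon^2}{2}=-\delta_1, \qquad \Delta+\eta q+\tfrac{\eta^2}{2}=-\delta_2,
\]
with $p=i-\tfrac32>0$ and $q=K-i-\tfrac32>0$. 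Solving each quadratic on the branch through the origin gives $\varepsilon=-p+\sqrt{p^2-2\delta_1-2\Delta}$ and $\eta=-q+\sqrt{q^2-2\delta_2-2\Delta}$, so that $\Delta=\varepsilon+\eta$ collapses to the single equation $\Psi(\Delta)=0$, where
\[
\Psi(\Delta)=\sqrt{p^2-2\delta_1-2\Delta}+\sqrt{q^2-2\delta_2-2\Delta}-\Delta-(p+q).
\]
From $\delta_1\leq i-2=p-\tfrac12$ we get $p^2-2\delta_1\geq(p-1)^2\geq 0$, and likewise $q^2-2\delta_2\geq(q-1)^2\geq 0$, so $\Psi$ is well defined and continuous on $[-2,0]$; moreover $\Psi(0)=\sqrt{p^2-2\delta_1}+\sqrt{q^2-2\delta_2}-(p+q)\leq 0$ since $\delta_1,\delta_2\geq 0$, while $\Psi(-2)>(p-1)+(q-1)+2-(p+q)=0$ since $p^2-2\delta_1+4\geq(p-1)^2+4>(p-1)^2$ and similarly for the $q$-term. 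By the intermediate value theorem $\Psi$ has a zero $\Delta^\ast\in(-2,0]$; the pair $(K+\Delta^\ast,\,i+\varepsilon^\ast)$ then solves the system, so $k^\ast=K+\Delta^\ast$ works and $|k^\ast-K|=|\Delta^\ast|<2$.

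It remains to treat the case in which $i$ cannot be chosen $\leq K-2$; this is exactly $n=K$ with $m\in\bigl(f(K,K-2),\binom{K}{2}\bigr]$, i.e.\ the extremal situation $\sg(K_{n,m})=n$ where the entire part of size $n$ is forced into the optimal set. Here I would run the same scheme, but centred at $(K,i_0)$, where $i_0\in(K-2,K]$ is the real number with $K-1+\binom{i_0-1}{2}=m$ (so the first equation already holds at $(K,i_0)$); since then $K-i_0-1\in[-1,1)$, the quantity $K-1+\binom{K-i_0-1}{2}$ differs from $n=K$ by at most $\tfrac98$, and one again arrives at an equation of the form $\Psi(\Delta)=0$ with $\Psi(0)\leq 0$ and $\Psi$ of the opposite sign at one of $\pm 2$. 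This extremal regime is the main obstacle: since $\binom{K-i_0-1}{2}$ is now evaluated on the left branch of the parabola $\binom{\cdot}{2}$, the quadratic governing $\eta$ picks up a coefficient $2(K-i_0-1)-1$ that may vanish, so the correct square-root branch has to be chosen according to the location of $i_0$, and the sign analysis near $\Delta=0$ and $\Delta=\pm 2$ must be done case by case (and the finitely many small $K$, say $K\leq 6$, checked directly). Away from this regime the proof is the uniform three-step argument above---reduce to the system, solve two quadratics, apply one intermediate-value argument on $[-2,0]$---and I expect the extremal case to require careful bookkeeping rather than any new idea.
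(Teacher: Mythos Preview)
The statement you are trying to prove is stated in the paper as an open \emph{conjecture}; the authors give no proof and use it only conditionally to motivate a constant-time algorithm for $\sg(K_{m,n})$. There is therefore no ``paper's own proof'' to compare against, and what you have written is an attempt to settle an open problem.

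Your core idea is sound, and in the generic regime it works. When Theorem~\ref{thm:sg=k} supplies an index $i$ with $4\le i\le K-4$ (equivalently, when $m>K$ and $n>K$), your identities $\Delta+\varepsilon p+\tfrac{\varepsilon^2}{2}=-\delta_1$ and $\Delta+\eta q+\tfrac{\eta^2}{2}=-\delta_2$ are correct, the branch choice $\varepsilon=-p+\sqrt{p^2-2\delta_1-2\Delta}$ is the right one, and the bounds $\delta_1\le i-2=p-\tfrac12$, $\delta_2\le q-\tfrac12$ give $p^2-2\delta_1\ge(p-1)^2$, $q^2-2\delta_2\ge(q-1)^2$; hence $\Psi$ is continuous on $[-2,0]$, $\Psi(0)\le 0$, and $\Psi(-2)>0$, so the intermediate value theorem produces a real solution $k^\ast\in(K-2,K]$. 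This genuinely proves $e(m,n)<2$ in that range.

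The argument is, however, incomplete. A minor issue: your identity $K-1+\binom{i-1}{2}=m+\delta_1$ tacitly uses $f(K,i)=K-1+\binom{i-1}{2}$, which is false for $i=2$ (where $f(K,2)=K$ but $K-1+\binom{1}{2}=K-1$), and your estimate $\Psi(-2)>(p-1)+(q-1)+2-(p+q)$ presumes $p,q\ge 1$, which fails for $i=2$ or $i=K-2$. These are patchable. The substantive gap is the case $K=\sg(K_{m,n})=n$, which you identify but do not finish. This is not a thin boundary: for every $n\le 8$ one has $\sg(K_{n,m})=n$ for \emph{all} $m$ with $n\le m\le\binom{n}{2}$ (see Table~\ref{tbl:values2}), and for larger $n$ it still covers an interval of $m$ of length of order $n$ near $\binom{n}{2}$. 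Your outlined plan there---centre at a real $i_0$ with $K-1+\binom{i_0-1}{2}=m$ and redo the sign analysis---is plausible, but the quadratic for $\eta$ now has leading coefficient governed by $2(K-i_0-1)-1$, which can vanish or change sign on the interval you need, so the branch selection and the location of the IVT endpoint (you will sometimes need $\Delta>0$) genuinely require a separate argument that you have not supplied. As written, your proposal proves the conjecture only when $\sg(K_{m,n})<n$.
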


If the conjecture is true, $\sg(K_{m,n})$ is among the (at most $16$) positive integers that are at distance $< 2$ from one of the four solutions of the system $m = k-1+\binom{i-1}2$, $n = k-1+\binom{k-i-1}2$. For each of these (at most) $16$ candidates, there are at most three (consecutive) $i$'s for which $f(k,i-1) \leq m \leq f(k,i)$, found easily by solving the quadratic equation $m = k - 1 + \binom{i-1}2$. For each such $i$, check if $f(k,k-i-1) \leq n \leq f(k,k-i)$. This allows for computation of $\sg(K_{m,n})$ with a constant number of operations.

\medskip

In the rest of this section we discuss the asymptotic behavior  of the strong geodetic problem on complete bipartite graphs.

\begin{theorem}
	\label{thm:asymptotic}
	Assume that $m \geq n \geq 2$, $m = \alpha n^\beta + \gamma n^{\beta - 1} + O(n^{\beta - 2})$. Then
	\[\sg(K_{n,m}) =  \begin{cases}
	  \alpha n^\beta + O(n^{\beta-1}), & m = \alpha n^\beta + O(n^{\beta-1})\mbox{ for } \beta > 2 \\
	  (\alpha-\frac 1 2) n^2 + O(n), & m = \alpha n^2 + O(n)\mbox{ for } \alpha > 1/2 \\
	  (\gamma + \frac 3 2) n + O(1), & m =n^2/2 + \gamma n + O(1) \mbox{ for } \gamma > -1/2 \\
	  n + O(1), & m =n^2/2 + \gamma n + O(1) \mbox{ for } \gamma \leq -1/2 \\
	  \sqrt{2\alpha} \cdot n + O(1), & m = \alpha n^2 + O(n)\mbox{ for } 0 < \alpha < 1/2 \\
	  \sqrt{2\alpha} \cdot n^{\beta/2} + O(1), & m =  \alpha n^\beta + O(n^{\beta-1})\mbox{ for } 1 < \beta < 2 \\
	  \sqrt{2}(1+\sqrt\alpha) n^{1/2} + O(1), & m = \alpha n + O(1)\mbox{ for } \alpha \geq 1
	\end{cases}\]
\end{theorem}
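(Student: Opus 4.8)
The plan is to read off all seven cases directly from the integer program~\eqref{optimizationProblem}. Write $n=|X|\le m=|Y|$, $s_1=|S\cap X|$, $s_2=|S\cap Y|$; then $\sg(K_{n,m})$ is the minimum of $s_1+s_2$ over integers with $0\le s_1\le n$, $0\le s_2\le m$, $\binom{s_1}2\ge m-s_2$ and $\binom{s_2}2\ge n-s_1$. The first structural observation is that everything is governed by whether $m>\binom n2$ (equivalently, up to lower‑order terms, whether $\sqrt{2m}\ge n$): under $m=\alpha n^\beta+\gamma n^{\beta-1}+O(n^{\beta-2})$ this holds precisely in the cases $\beta>2$; $\beta=2$ with $\alpha>1/2$; $\beta=2$, $\alpha=1/2$, $\gamma>-1/2$ — the first three cases of the theorem — and fails in the last four.

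When $m>\binom n2$ there is nothing left to optimize: Corollary~\ref{cor:bipartite} gives the exact value $\sg(K_{n,m})=m+1-\binom{n-1}2=m-\tfrac{n^2}2+\tfrac{3n}2$ for $n\ge 3$ (and $\sg=m$ for $n\le 3$, which is asymptotically irrelevant). So these three cases reduce to substituting the expansion of $m$ and collecting terms: for $\beta>2$ the leading term is $\alpha n^\beta$; for $\beta=2$ one gets $(\alpha-\tfrac12)n^2+O(n)$, a genuine $\Theta(n^2)$ term exactly when $\alpha>1/2$; and when $\alpha=1/2$ the $n^2$‑contributions cancel, leaving $(\gamma+\tfrac32)n+O(1)$, a genuine $\Theta(n)$ term exactly when $\gamma>-1/2$.

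In the complementary regime $m\le\binom n2$ the box constraint $s_1\le n$ is (typically) slack and the optimum sits near $s_1\approx\sqrt{2m}$, $s_2\approx\sqrt{2(n-\sqrt{2m})}$. For the lower bound, $\binom{s_1}2\ge m-s_2$ forces $s_1\ge\sqrt{2(m-s_2)}$, and — as long as $s_1<n$ — $\binom{s_2}2\ge n-s_1$ forces $s_2\ge\sqrt{2(n-s_1)}$; with $h(t)=t+\sqrt{2\max\{n-t,0\}}$, which is increasing on $[0,n-\tfrac12]$, this yields $s_1+s_2\ge h(s_1)\ge\min\{\,n,\ \sqrt{2m}+\sqrt{2(n-\sqrt{2m})}\,\}-O(1)$, the value $n$ arising from the alternative feasible point $(n,0)$ (which, together with the explicit pair $s_1=\lceil 1+\sqrt{2m}\,\rceil$, $s_2=\lceil 1+\sqrt{2(n-s_1)}\,\rceil$, also supplies the matching upper bound). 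Hence in this regime
\[\sg(K_{n,m})=\min\bigl\{\,n,\ \sqrt{2m}+\sqrt{2(n-\sqrt{2m})}\,\bigr\}+O(1).\]
It then remains only to Taylor‑expand. If $m=n^2/2+\gamma n+O(1)$ with $\gamma\le-1/2$, then $n-\sqrt{2m}=-\gamma+O(1/n)$ is bounded, the correction $\sqrt{2(n-\sqrt{2m})}$ is $O(1)$, and $\sg=n+O(1)$. If $m=\alpha n^2+O(n)$ with $0<\alpha<1/2$, or $m=\alpha n^\beta+O(n^{\beta-1})$ with $1<\beta<2$, then $\sqrt{2m}=\sqrt{2\alpha}\,n^{\beta/2}+o(1)$ dominates $n-\sqrt{2m}=\Theta(n)$, whose square‑root is of lower order than the leading term, giving the stated asymptotics. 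If $m=\alpha n+O(1)$ with $\alpha\ge1$, both $\sqrt{2m}=\sqrt{2\alpha}\,\sqrt n+O(1)$ and $\sqrt{2(n-\sqrt{2m})}=\sqrt{2n}-\sqrt\alpha+O(1)$ are of order $\sqrt n$ and add to $\sqrt2(1+\sqrt\alpha)\sqrt n+O(1)$.

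The delicate step is the error bookkeeping in the third paragraph: pinning the integer optimum to within an additive constant of the real‑relaxation minimum (handling the ceilings, and verifying that the objective is flat enough near the minimizer that rounding costs only $O(1)$), together with the behaviour at the seam $m\approx\binom n2$, where the active constraint changes and the two branches of the $\min$ meet — this is exactly where case (iii) of the first regime and case (iv) of the second must be made to agree. Everything else is routine asymptotic expansion. (Alternatively, one can locate the minimizer by solving explicitly the quartic in $k$ arising from $m=k-1+\binom{s_1-1}2$, $n=k-1+\binom{s_2-1}2$, $s_1+s_2=k$, as in the discussion preceding the Conjecture, but the two‑square‑root description above is the more transparent route to the asymptotics.)
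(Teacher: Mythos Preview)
Your approach agrees with the paper's for the first three cases (both invoke Corollary~\ref{cor:bipartite}), but diverges for the remaining four. The paper works through Theorem~\ref{thm:sg=k}: it constructs explicit integers $k=k(n)$ and $i=i(n)$ and verifies the inequalities $f(k,i-1)\le m\le f(k,i)$ and $f(k,k-i-1)\le n\le f(k,k-i)$ directly --- most carefully in case four, where it takes $k = n - c$ for a constant $c$ depending only on $\gamma$. You instead analyze the real relaxation of the integer program~\eqref{optimizationProblem}, extract the closed form $\min\{n,\sqrt{2m}+\sqrt{2(n-\sqrt{2m})}\}+O(1)$, and then expand. Your route is more self-contained (it does not rely on the classification in Theorem~\ref{thm:sg=k}) and treats cases four through seven uniformly; the paper's route, by contrast, gives a cleaner handle on the additive constant in case four and makes the seam at $m\approx\binom{n}{2}$ less delicate, since the $f$-inequalities are checked exactly rather than only asymptotically.

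Two points to tighten. First, your lower-bound step ``$h(s_1)\ge h(\sqrt{2m})$ by monotonicity'' presupposes $s_1\ge\sqrt{2m}$, whereas the constraint only yields $s_1\ge\sqrt{2(m-s_2)}$; you need a short case split on whether $s_1\ge\sqrt{2m}$ (then monotonicity of $h$ applies) or $s_1<\sqrt{2m}$ (then the \emph{first} constraint forces $s_2\ge m-\binom{s_1}{2}$, which is already large enough). Second, in cases five and six your own formula produces $\sqrt{2\alpha}\,n^{\beta/2}+\Theta(\sqrt n)$, not $+\,O(1)$: the correction $\sqrt{2(n-\sqrt{2m})}$ is of order $\sqrt n$ there, and your remark that it is ``of lower order than the leading term'' does not recover the $O(1)$ stated in the theorem. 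The paper's sketch has precisely the same limitation (it only argues $\sg\sim\sqrt{2\alpha}\,n^{\beta/2}$), so this appears to be an imprecision in the theorem's stated error terms rather than a defect specific to your argument.
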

\begin{proof}[Sketch of proof]
	The first three cases follow from the last corollary. Indeed, in each of these cases, $m > \binom{n}2$ for $n$ large enough, so $\sg(K_{m,n}) = m + 1 - \binom{n-1}2$; which gives the stated result. For example, when $m = n^2/2 + \gamma n + O(1)$ for $\gamma > -1/2$, we get $\sg(K_{m,n}) = n^2/2 + \gamma n + O(1) + 1 - \frac{n^2-3n+2}{2} = (\gamma+3/2)n + O(1)$.\\
  Now assume that $m = n^2/2 + \gamma n + O(1)$, $\gamma \leq -1/2$. Assume first that there exists a non-negative integer $c$ so that $-\gamma - 5/2 < c + \lceil (1+\sqrt{8c+9})/2 \rceil < -\gamma - 3/2$. Denote $\lceil (1+\sqrt{8c+9})/2 \rceil$ by $d$, and define $k = k(n) = n - c$ and $i = i(n) = k - d - 1$. Let us check that for $n$ large enough,
  $$f(k, i-1) \leq m \leq f(k, i), \qquad f(k, k-i-1) \leq n \leq f(k, k-i).$$
  Indeed, we have
  \begin{multline*}
  f(k,i) = k - 1 + \binom{i-1}2 = n - c - 1 + \binom{n - c - d - 2}{2} \\
  = n - c - 1 + \frac{(n - c - d - 2)(n - c - d - 3)}{2} \\
  = n^2/2 + n (1 - (c + d + 5/2)) + O(1) > m,
  \end{multline*}
  \begin{multline*}
  f(k,i-1) = k - 1 + \binom{i-2}2 = n - c - 1 + \binom{n - c - d - 3}{2} \\
  = n - c - 1 + \frac{(n - c - d - 3)(n - c - d - 4)}{2} \\
  = n^2/2 + n (1 - (c + d + 7/2)) + O(1) < m.
  \end{multline*}
  On the other hand, we claim that $d$ is the smallest non-negative integer so that $\binom{d}2 \geq c + 1$. Indeed, if $\binom x 2 \geq c + 1$ for $x$, then $x^2 - x - 2c - 2 \geq 0$ and therefore $x \geq (1 + \sqrt{1+4(2c+2)})/2$. If $x$ is an integer, then $x \geq d$. It follows that $\binom{d-1}2 < c + 1 \leq \binom{d}2$. Then
  $$f(k,k - i - 1) = k - 1 + \binom{k - i - 2}2 = n - c - 1 + \binom{d - 1}{2} < n$$
  and
  $$f(k,k - i) = k - 1 + \binom{k - i - 1}2 = n - c - 1 + \binom{d}{2} \geq n.$$
  This proves that $\sg(m,n) = k$.\\
  If such $c$ does not exist, there are two options. One is that the sequence $(c +  \lceil (1+\sqrt{8c+9})/2 \rceil)_{c=0}^\infty$, which contains all natural numbers except $\binom j 2$, $j \geq 0$, skips the interval $[-\gamma - 5/2,-\gamma-3/2]$. Take $c$ so that
  $$c + \lceil (1+\sqrt{8c+9})/2 \rceil < -\gamma - 5/2 < -\gamma - 3/2 < c + 1 +\lceil (1+\sqrt{8c+17})/2 \rceil.$$
  It turns out that in that case, $8c+9$ is the square of an odd integer, say $8c + 9 = (2d-3)^2$ (i.e.~$c = (d^2-3j)/2$ and $(1+\sqrt{8c+9})/2 = d - 1$). Now take $k = k(n) = n - c$ and $i = k - d - 1$. Then $f(k,i-1) \leq m \leq f(k,i)$ and $f(k,k-i-1) \leq n \leq f(k,k-i)$ for $n$ large enough, so $\sg(K_{n,m}) = k$.\\
  Another case is that $\gamma$ is half a negative integer. A similar analysis holds, with some weak inequalities replacing strong inequalities, and dealing separately with cases $m \geq n^2/2 + \gamma n$ and $m < n^2/2 + \gamma n$. We leave this as an exercise for the reader.\\
  We sketch the proof for the last three cases. If $m \sim \alpha n^2$ for $0 < \alpha < 1/2$, take $k \sim \sqrt{2\alpha} \, n$ and $k - i \sim \sqrt{2(1-\sqrt{2\alpha})} \sqrt n$ (in particular, $i \sim \sqrt{2\alpha} \, n$). Then
  $$f(k,i) \sim k + \binom{i} 2 \sim \sqrt{2\alpha} \, n + \frac{i^2}{2} \sim \sqrt{2\alpha} \, n + \alpha n^2 \sim \alpha n^2$$
  and
  $$f(k,k-i) \sim k + \binom{k-i} 2 \sim \sqrt{2\alpha} \, n + (1-\sqrt{2\alpha}) n \sim n.$$
  We can adapt this to show that $\sg(K_{n,m}) \sim \sqrt{2\alpha} \,  n$.\\
  If $m \sim \alpha n^\beta$ for $\alpha > 0$ and $1 < \beta < 2$, we can repeat the previous calculation with $k \sim \sqrt{2\alpha} n^{\beta/2}$ and $k - i \sim \sqrt{2n}$, and if $m \sim \alpha n$, with $k \sim \sqrt 2(1+\sqrt \alpha) \sqrt n$ and $i \sim \sqrt{2 \alpha n}$.
\end{proof}

\section{On complete multipartite graphs}
\label{sec:complete_multipartite}

The optimization problem~\eqref{optimizationProblem} can be generalized to complete multipartite graphs. However, solving such a program seems rather difficult. Hence, we present an approximate program which gives a nice lower bound for the strong geodetic number of a complete multipartite graph. If $i$ vertices from one part are in a strong geodetic set, geodesics between them cover at most $\binom{i}{2}$ other vertices. In the following, we do not take into account the condition that they can only cover vertices in other parts, and that the number of selected vertices must be an integer. Let $G$ be a complete multipartite graph corresponding to the partition $\pi = \langle 1^{m_1}, \ldots, k^{m_k} \rangle$ and let $a_{ij}$ denote the number of parts of size $j$ with exactly $i$ vertices in the strong geodetic set. Thus we must have $\sum_{i = 0}^j a_{ij} = m_j$ and $\sum_{j=1}^k \sum_{i=0}^j \binom{i}{2} a_{ij} \geq \sum_{j = 1}^k \sum_{i = 0}^j (j-i) a_{ij}$. The second condition simplifies to $\sum_{j=1}^k \sum_{i=1}^j \binom{i+1}{2} a_{ij} \geq \sum_{j = 1}^k \sum_{i = 0}^j j a_{ij} = \sum_{j = 1}^k j m_j = n$. As $a_{0j}$'s do not appear in it anymore, we also simplify the first condition to $\sum_{i = 1}^j a_{ij} \leq m_j$ and get

\begin{align}
\label{optimizationProblemMulti}
\begin{split}
\min \quad & \sum_{j=1}^k \sum_{i=1}^j i a_{ij} \\
\text{subject to: } & \sum_{i = 1}^j a_{ij} \leq m_j\\
&  \sum_{j=1}^k \sum_{i=1}^j \binom{i+1}{2} a_{ij} \geq n\\
&  0 \leq a_{ij} \leq m_j
\end{split}
\end{align}

As the sequence $\binom{k}{2} - k$ is increasing for $k \geq 3$, it is better to select more vertices in a bigger part. Hence, the optimal solution is

\begin{align*}
a_{k,k} & =  m_k \\
a_{k-1,k-1} & =  m_{k-1} \\
 & \vdots  \\
a_{l+1,l+1} & =  m_{l+1} \\
a_{l,l} & =  \frac{l m_l + \cdots + 1 m_1 - \binom{k}{2} m_k - \cdots - \binom{l+1}{2} m_{l+1}}{\binom{l+1}{2}}\,,
\end{align*}
where $l$ is the smallest positive integer such that $\binom{k+1}{2} m_k + \cdots + \binom{l+2}{2} m_{l+1} \leq k m_k + \cdots + 1 m_1 = |V(K_{\langle 1^{m_1}, \ldots, k^{m_k} \rangle})|$, which is equivalent to $l m_l + \cdots + 1 m_1 \geq \binom{k}{2} m_k + \cdots + \binom{l+1}{2} m_{l+1}$, and
$$\sg(K_{\langle 1^{m_1}, \ldots, k^{m_k} \rangle}) \geq \left \lceil k m_k + \cdots + (l+1) m_{l+1} + \frac{l m_l + \cdots + m_1 - \binom{k}{2} m_k - \cdots - \binom{l+1}{2} m_{l+1}}{\frac{l+1}{2}} \right \rceil.$$

The result is particularly interesting in the case when $\pi = \langle k^m \rangle$, i.e.\ when we observe a multipartite graph with $m$ parts of size $k$, as we get $l = k$ and $$\sg(K_{\langle k^m \rangle}) \geq \left \lceil \frac{2 k m}{k + 1} \right \rceil \, .$$

On the other hand, considering a strong geodetic set consisting only of the whole parts of the bipartition yields an upper bound. At least $l \in \Z$, where $l (k + \binom{k}{2}) \geq m k$, parts must be in a strong geodetic set. Hence,
$$\sg(K_{\langle k^m \rangle}) \leq \left \lceil \frac{2 m}{k + 1} \right \rceil \cdot k \, .$$

This implies the following result.

\begin{proposition}
	\label{prop:km}
	If $k, n \in \mathbb{N}$ and $(k+1)|2m$, then $\sg(K_{\langle k^m \rangle}) = \frac{2 m k}{k + 1}$.
\end{proposition}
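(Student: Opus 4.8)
The plan is to sandwich $\sg(K_{\langle k^m\rangle})$ between the two bounds derived immediately above the statement, and then observe that the divisibility hypothesis $(k+1)\mid 2m$ makes both of them collapse to the common value $2mk/(k+1)$. Recall that the relaxed program~\eqref{optimizationProblemMulti} gave the lower bound $\sg(K_{\langle k^m\rangle})\geq\lceil 2mk/(k+1)\rceil$, while taking a strong geodetic set consisting only of whole parts gave the upper bound $\sg(K_{\langle k^m\rangle})\leq\lceil 2m/(k+1)\rceil\cdot k$. Thus it suffices to show that, under the assumption $(k+1)\mid 2m$, both ceilings are superfluous.

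For the upper bound this is immediate: $(k+1)\mid 2m$ means $2m/(k+1)\in\Z$, so $\lceil 2m/(k+1)\rceil\cdot k=2mk/(k+1)$. For the lower bound, $(k+1)\mid 2m$ certainly implies $(k+1)\mid 2mk$, hence $2mk/(k+1)\in\Z$ and $\lceil 2mk/(k+1)\rceil=2mk/(k+1)$. Combining the two inequalities gives $2mk/(k+1)\leq\sg(K_{\langle k^m\rangle})\leq 2mk/(k+1)$, which is the assertion.

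Since both inequalities are already in hand, there is no real obstacle here; the one point worth spelling out is that the upper-bound construction does yield an admissible strong geodetic set for $l:=2m/(k+1)$. Concretely, putting $l$ entire parts into $S$ accounts for $lk$ vertices directly, and the $l\binom{k}{2}$ length-two geodesics internal to these parts may each be routed through an arbitrary vertex of an unselected part; since every one of the $(m-l)k$ unselected vertices is eligible for every such geodesic, all of them can be covered exactly when $l\binom{k}{2}\geq (m-l)k$, equivalently $l\geq 2m/(k+1)$, which holds here with equality. Noting also that $1\leq l\leq m$ (immediate from $k\geq 1$ and $m\geq 1$), the displayed upper bound is legitimate, and the short squeeze above finishes the proof. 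As a sanity check one may observe that for $k=1$ this recovers $\sg(K_m)=m$.
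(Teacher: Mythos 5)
Your proof is correct and follows essentially the same route as the paper: the paper derives the lower bound $\lceil 2km/(k+1)\rceil$ from the relaxed program and the upper bound $\lceil 2m/(k+1)\rceil\cdot k$ from whole-part strong geodetic sets, and the proposition is exactly the observation that the divisibility hypothesis makes the two coincide. Your added verification that the whole-part construction is admissible is a reasonable (if brief) spelling-out of what the paper leaves implicit.
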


\section{Complexity results for multipartite graphs}
\label{sec:NP}

The strong geodetic problem can be naturally formed as a decision problem.

\vspace{2mm}
\textsc{Strong geodetic set}\\
Input: a graph $G$, an integer $k$\\
Question: does a graph $G$ have a strong geodetic set of size at most $k$?
\vspace{2mm}

The strong geodetic problem on general graphs is known to be NP-complete~\cite{MaKl16a}. In the following we prove that it is also NP-complete on multipartite graphs.

The reduction uses the dominating set problem. Recall that a set $D \subseteq V(G)$ is a dominating set in the graph $G$ if every vertex in $V(G) - D$ has a neighbor in $D$.

\vspace{2mm}
\textsc{Dominating set}\\
Input: a graph $G$, an integer $k$\\
Question: does a graph $G$ have a dominating set of size at most $k$?
\vspace{2mm}

The dominating set problem is known to be NP-complete on bipartite graphs~\cite{Liedloff}, hence it is also NP-complete on multipartite graphs. The idea of the following proof is similar to the proof that the ordinary geodetic problem restricted to chordal bipartite graphs is NP-complete~\cite{dourado}.

\begin{theorem}
	\label{thm:NPC}
	\textsc{Strong geodetic set} restricted to bipartite graphs is NP-complete.
\end{theorem}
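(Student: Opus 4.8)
The plan is to prove NP-completeness by reducing \textsc{Dominating set} on bipartite graphs to \textsc{Strong geodetic set} on bipartite graphs. Membership in NP is immediate: a certificate is the set $S$ together with the chosen geodesic for each pair, and checking that these cover all of $V$ takes polynomial time. So the heart of the argument is a polynomial-time reduction.

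**First I would** describe the construction. Given a bipartite graph $G = (A \cup B, E)$ and an integer $k$ as an instance of \textsc{Dominating set}, I build a bipartite graph $G'$ as follows. Keep the vertices of $G$ and its edges. For each vertex $v \in A \cup B$ attach a ``pendant'' gadget that forces $v$ (or something functionally equivalent) into any strong geodetic set unless $v$ is dominated — the standard trick is to add, for each $v$, a new degree-one vertex $v'$ adjacent only to $v$; such a simplicial/pendant vertex must lie in every strong geodetic set (its only geodesics to anything are the ones starting at $v'$, so if $v' \notin S$ it can only be covered as an interior vertex of a geodesic, which is impossible for a leaf). That already forces all the $v'$ into $S$, giving $|A|+|B|$ vertices for free, which is too many; so instead I would make the gadget coarser: attach one common apparatus so that the ``forced'' part of $S$ has controlled size, and the remaining freedom in choosing $S$ corresponds exactly to choosing a dominating set. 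Concretely, following the chordal-bipartite geodetic reduction of~\cite{dourado}, I would add two new vertices $x, y$ in fresh parts (or fold them into $A$, $B$ appropriately to keep bipartiteness), make $x$ adjacent to all of $A$ and $y$ adjacent to all of $B$, add a pendant leaf to $x$ and to $y$ forcing $x, y \in S$, and arrange distances so that the geodesics with endpoints in $\{x,y\} \cup S'$ cover a vertex $u \in A \cup B$ precisely when $u \in S'$ or $u$ has a neighbor in $S' \cap (\text{opposite side})$. Then $G'$ has a strong geodetic set of size $k + O(1)$ iff $G$ has a dominating set of size $k$.

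**The main step** is the equivalence of the two instances, and this is where care is needed in two directions. For the forward direction, from a dominating set $D$ of $G$ I must exhibit an explicit strong geodetic set of the prescribed size together with a covering family of geodesics — I take $D$ plus the forced gadget vertices, and for each vertex $u \notin D$ I pick a geodesic through $u$ using a dominator of $u$ and one of the apex vertices $x, y$; I must check these are genuinely shortest paths in $G'$, which is where the distance bookkeeping (diameter, no unintended short-cuts created by the apexes) has to be verified. For the reverse direction, from a strong geodetic set $S$ of size $k + O(1)$ in $G'$ I must extract a dominating set of size $\le k$ in $G$: the forced leaves and apex vertices eat up the $O(1)$ slack, and the remaining $\le k$ ``free'' vertices of $S$, after possibly sliding each one from a gadget vertex onto a nearby original vertex without increasing the size, must dominate $G$ — because every original vertex $u$ is covered by some fixed geodesic, and a structural analysis of which geodesics in $G'$ can pass through $u$ shows that the only way is via a neighbor of $u$ lying in $S$.

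**The hard part will be** getting the gadget exactly right so that (i) $G'$ stays bipartite, (ii) adding the apex/forcing vertices does not accidentally create new short paths that let a single well-placed vertex of $S$ cover many vertices it ``shouldn't'' (which would break the reverse direction), and (iii) the additive constant by which the two thresholds differ is genuinely constant (independent of $|G|$) so the reduction is a correct many-one reduction. In particular I would need a clean lemma characterizing, for a vertex $u$ of the original graph sitting inside $G'$, exactly which pairs of $S$-vertices admit a geodesic through $u$; everything then reduces to that lemma. I expect the write-up to spend most of its length on verifying distances in $G'$ and on this characterization lemma, with the two directions of the reduction being short corollaries once it is in place.
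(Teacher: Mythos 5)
There is a genuine gap, and it is located exactly at the point where the \emph{strong} geodetic problem differs from the ordinary geodetic problem. Your plan rejects the ``pendant twin on every vertex'' construction on the grounds that forcing $|A|+|B|$ vertices into $S$ is ``too many,'' and replaces it with a coarser gadget (two apexes plus two forced leaves) and a threshold of the form $k+O(1)$. But a many-one reduction does not require the additive offset to be constant --- it only needs to be computable --- and the paper's reduction indeed uses the threshold $k'=k+|V(G)|$, forcing a pendant copy $v'$ of every vertex of $G$ into the strong geodetic set. Those $|V(G)|$ forced vertices are not a defect to be engineered away; they are essential. In the strong geodetic problem each pair of set vertices contributes exactly \emph{one} fixed shortest path, so a dominating vertex $b$ with $t$ dominated neighbors needs $t$ distinct partner endpoints in $S$ to cover all $t$ of them. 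With your $O(1)$ gadget, $b$ has only a bounded number of useful partners (the two apexes and their leaves), so the forward direction fails: a dominating set of size $k$ does not yield a strong geodetic set of size $k+O(1)$. The Dourado-style reduction you cite works for the ordinary geodetic number precisely because there \emph{all} geodesics between a pair are used, so one pair $(b,\mathrm{leaf})$ covers every neighbor of $b$ at once; that is the feature that disappears here.

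The paper's construction fixes this by giving every dominator enough partners: it adds $u_1,u_2$ with $u_1\sim u_2$, $x\sim u_2\sim x'$ for all $x\in X$ and $y\sim u_1\sim y'$ for all $y\in Y$, takes $D'=D\cup\{x':x\in X\}\cup\{y':y\in Y\}$, and for $x\in X\cap D$ assigns to the pair $(x,y')$ the geodesic $x\sim y\sim u_1\sim y'$ for each $y\in N_G(x)$ --- one primed partner per dominated neighbor. The reverse direction then follows because the primed vertices are simplicial (hence forced), geodesics among them avoid $V(G)$, and any geodesic whose interior contains $u\in V(G)-D$ must have an endpoint in $D$ adjacent to $u$. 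If you want to salvage your write-up, the repair is simply to keep the per-vertex pendants, set the threshold to $k+|V(G)|$, and drop the requirement that the offset be $O(1)$; the rest of your outline (simpliciality forcing, the covering characterization, the distance bookkeeping) is then essentially the paper's argument.
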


\proof
To prove NP-completeness, we describe a polynomial reduction of \textsc{Dominating set} on bipartite graphs to \textsc{Strong geodetic set} on bipartite graphs. Let $(G,k)$ be an input for \textsc{Dominating set}, and $(X, Y)$ a bipartition of the graph $G$. Define a graph $G'$, $$V(G') = V(G) \cup \{ u_1, u_2 \} \cup \{ x' \; ; \; x \in X \} \cup \{ y' \; ; \; y \in Y \},$$
with the edges $E(G)$, $u_1 \sim u_2$, and $x \sim u_2 \sim x'$ for all $x \in X$, $y \sim u_1 \sim y'$ for all $y \in Y$. Define the sets $X' = X \cup \{ u_1 \} \cup \{ x' \; ; \; x \in X \}$, $Y' = Y \cup \{ u_2 \} \cup \{ y' \; ; \; y \in Y \}$,
and observe that $(X', Y')$ is a bipartition of the graph $G'$. Define the parameter $k' = k + |V(G)|$.

Suppose $D$ is a dominating set of the graph $G$ of size at most $k$.  Define $$D' = D \cup \{ x' \; ; \; x \in X \} \cup \{ y' \; ; \; y \in Y \}.$$ Notice that $|D'| \leq k'$. For each $x \in X \cap D$, fix geodesics
$x \sim y \sim u_1 \sim y', y \in N_G(x)$.
Similarly, for each $y \in Y \cap D$, fix $y \sim x \sim u_2 \sim x', x \in N_G(y)$.
As $D$ is a dominating set, these geodesics cover all vertices in $V(G)$. Additionally, fix geodesics $x \sim u_2 \sim x'$ for some $x \in X$, and $y \sim u_1 \sim y'$ for some $y \in Y$, to cover the vertices $u_1, u_2$. Hence, $D'$ is a strong geodetic set of the graph $G'$.

Conversely, suppose $D'$ is a strong geodetic set of $G'$ of size at most $k'$. Vertices $\{ x' \; ; \; x \in X \} \cup \{ y' \; ; \; y \in Y \}$ are all simplicial, hence they all belong to $D'$. Geodesics between them cannot cover any vertices in $V(G)$, thus $V(G) \cap D' \neq \emptyset$. Let $D = D' \cap V(G)$. Clearly, $|D| \leq k$. Consider $x \in V(G) - D$. Thus $x$ is an inner point of some $y,z$-geodesic. At most one of $y, z$ does not belong to $D$. The structure of the graph ensures that at least one of $y, z$ is a neighbor of $x$. Hence, $D$ is a dominating set of the graph $G$.
\qed

\begin{corollary}
	\label{cor:NPC}
	\textsc{Strong geodetic set} restricted to multipartite graphs is NP-complete.
\end{corollary}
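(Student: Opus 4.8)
The plan is to derive this directly from Theorem~\ref{thm:NPC} by observing that bipartite graphs are a special case of multipartite graphs, so we need to show containment in NP and NP-hardness. For NP-membership, I would note that \textsc{Strong geodetic set} on arbitrary graphs (and in particular on multipartite graphs) is in NP: a certificate consists of the set $S$ of size at most $k$ together with a choice of one shortest $x$--$y$ path for each pair $\{x,y\}\subseteq S$; one verifies in polynomial time that each chosen path is indeed a geodesic (e.g.\ by a BFS from each endpoint) and that the union of their vertex sets is all of $V(G)$. This is the same observation already implicit in the discussion preceding Theorem~\ref{thm:NPC}, so it can be stated in one sentence.

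For NP-hardness, the key point is that every bipartite graph is in particular a multipartite graph (a $2$-partite graph), so the reduction constructed in the proof of Theorem~\ref{thm:NPC} already produces, from an input $(G,k)$ for \textsc{Dominating set} on bipartite graphs, an instance $(G',k')$ where $G'$ is bipartite, hence multipartite. Therefore the very same polynomial-time reduction witnesses that \textsc{Strong geodetic set} on multipartite graphs is NP-hard. No new construction is required; one simply remarks that the class of instances used in Theorem~\ref{thm:NPC} is a subclass of the instances allowed here, and that ``$G'$ bipartite'' implies ``$G'$ multipartite.''

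I expect there to be essentially no obstacle here: the corollary is a routine consequence of the theorem together with the trivial inclusion of graph classes. The only mild subtlety worth a remark is to make sure the problem is genuinely well-posed on multipartite graphs (i.e.\ that ``multipartite'' is not being read as ``complete multipartite,'' which would be a different and more restrictive input class); since the statement says ``multipartite graphs'' without ``complete,'' the reduction applies verbatim. If one instead wanted the statement for \emph{complete} multipartite graphs, the result would in fact fail to follow this way, since $\sg(K_{n_1,\dots,n_r})$ has the tractable-looking bounds derived in Section~\ref{sec:complete_multipartite}; but that is not what is being claimed, so a one-paragraph argument suffices.
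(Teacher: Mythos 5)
Your proposal is correct and matches the paper's (implicit) reasoning exactly: the paper states this corollary without proof precisely because every bipartite graph is multipartite, so Theorem~\ref{thm:NPC} gives NP-hardness immediately, and NP-membership is routine. Your remark distinguishing ``multipartite'' from ``complete multipartite'' is also consistent with the paper, which treats the complete multipartite case separately and only conjectures NP-completeness there.
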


In the following we consider the complexity of \textsc{Strong geodetic set} on \emph{complete} multipartite graphs. Proposition~\ref{prop:two} gives rise to the following algorithm.

Let $G$ be a graph and $(X_1, \ldots, X_r)$ its multipartition. Denote $n_i = |X_i|$, $i \in [r]$. For all $\{i, j\} \subseteq \binom{[r]}{2}$, for all subsets $R$ of $[r] - \{i,j\}$, for all $s_i \in \{0,\ldots,n_i\}$, for all $s_j \in \{0,\ldots,n_j\}$, set $S_i \subseteq X_i$ of size $s_i$, and $S_j \subseteq X_j$ of size $s_j$. Check if $S_i \cup S_j \cup \bigcup_{k \in R} X_k$ is a strong geodetic set for $G$. The answer is the size of the smallest strong geodetic set.

The time complexity of this algorithm is $O(n^2 r^2 2^r)$. This confirms the already known result that \textsc{Strong geodetic set} restricted to complete bipartite graphs is in $P$, which is an easy consequence of Theorem~\ref{thm:sg=k}. Moreover, it is now clear that the problem is solvable in quadratic time. The same holds for complete $r$-partite graphs (when $r$ is fixed). But for a general complete multipartite graph (when the size of the multipartition is part of the input), the algorithm tells us nothing about complexity.

But we also observe an analogy between the \textsc{Strong geodetic set} problem on complete multipartite graphs and the \textsc{Knapsack problem}, which is known to be NP-complete~\cite{NP}. Recall that in this problem, we are given a set of items with their weights and values, and we need to determine which items to put in a backpack, so that a total weight is smaller that a given bound and a total value is as large as possible. The approximate reduction from the \textsc{Strong geodetic set} on complete multipartite graphs to the \textsc{Knapsack problem} is the following. Let $(X_1, \ldots, X_r)$ be the parts of the complete multipartite graph. The items $x_1, \ldots, x_r$ represent those parts, a value if $x_i$ is $\binom{|X_i|}{2}$ and the weight is $|X_i|$. Thus selecting the items such that their total value is as large as possible and the total weight as small as possible, is almost the same as finding the smallest strong geodetic set of the complete multipartite graph (as Proposition~\ref{prop:two} states that at most two parts in the strong geodetic set are selected only partially). We were not able to find a reduction from the \textsc{Knapsack problem} to the \textsc{Strong geodetic set} on complete multipartite graphs. But due to the connection with the \textsc{Knapsack problem}, it seems that the problem is not polynomial. Hence we pose

\begin{conjecture}
	\label{conjecture}
	\textsc{Strong geodetic set} restricted to complete multipartite graphs is NP-complete.
\end{conjecture}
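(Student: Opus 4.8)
The plan is to establish the two directions separately: membership in NP, and NP-hardness via a reduction from a suitable weighted packing problem. Membership in NP is routine: a strong geodetic set of size at most $k$ together with a choice of one shortest path for each pair of its vertices is a polynomial-size certificate, and checking that the chosen paths cover $V(G)$ takes polynomial time. Alternatively, by Proposition~\ref{prop:two} it suffices to guess the (at most two) partially taken parts, the numbers of vertices taken from them, and which parts are taken wholly — an even shorter certificate whose validity is checkable in polynomial time.

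For NP-hardness, the idea is to exploit the ``almost-Knapsack'' structure already noted after Corollary~\ref{cor:NPC}. Given a complete multipartite graph with part sizes $n_1,\dots,n_r$ and $N=\sum_i n_i$, Proposition~\ref{prop:two} reduces the search for an optimal strong geodetic set to choosing a set $A$ of wholly taken parts together with at most two partially taken parts contributing $s$ and $t$ vertices, subject (up to the ``a part does not cover itself'' technicality, which is non-binding when there are many parts, exactly as in the computation of $\sg(K_{\langle k^m\rangle})$) to the covering inequality $\sum_{i\in A}\binom{n_i}{2}+\binom{s}{2}+\binom{t}{2}\ge N-\bigl(\sum_{i\in A}n_i+s+t\bigr)$, while minimizing $\sum_{i\in A}n_i+s+t$. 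I would encode an instance of a strongly NP-complete problem — Subset Sum, or, to neutralise the convexity issue below, Bin Packing or $3$-Partition with payload numbers forced to lie in a bounded window — into the multiset of part sizes, and add a large number of ``filler'' parts of sizes $1$ and $2$, which contribute no (resp.\ little) covering value but must themselves be covered. The count of filler vertices is chosen so that the covering inequality forces the wholly taken payload parts to have $\binom{\cdot}{2}$-sum hitting a prescribed target, which in turn pins down which payload numbers are selected, and one then reads off a solution of the packing instance.

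The main obstacle is twofold. First, $s\mapsto\binom{s}{2}$ is strictly convex, so an adversarial solver always prefers one large part to several medium parts of the same total size; to stop this from trivialising the instance one must either confine the payload numbers to a window $[M,2M]$ (so their value-to-weight ratios lie within a constant factor and the number of selected payload parts is essentially forced), which steers the reduction toward Bin Packing or $3$-Partition rather than plain Subset Sum, or introduce gadget parts that penalise lopsided choices. Second — and this is the crux of why the statement is only conjectured — Proposition~\ref{prop:two} permits two partially taken parts, granting the solver an additive $O(1)$ amount of rounding freedom (with the ceiling in the lower bound of Section~\ref{sec:complete_multipartite} contributing still more). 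To make the reduction faithful I would scale every part size by a large polynomial factor $\Lambda$, so that the gap between the optima of YES- and NO-instances grows like $\Lambda$ while the slack from the two partial parts and from integrality stays $O(1)$; one must then verify that the two partial parts cannot be used to ``bridge'' a NO-instance, which amounts to a case analysis on the possible values of $s$ and $t$. That case analysis is where most of the work lies, and also where the approach most plausibly breaks — if the two partial parts (or a clever dynamic program exploiting them) turn out to give enough flexibility, the problem could in fact lie in $\mathrm{P}$, which is precisely why we state it as a conjecture.
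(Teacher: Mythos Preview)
The paper does not prove this statement; it is posed as an open conjecture. The authors explicitly write that they were unable to find a reduction from \textsc{Knapsack} to \textsc{Strong geodetic set} on complete multipartite graphs, and they offer only the informal analogy between the two problems as motivation. There is therefore no proof in the paper to compare your proposal against.

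Your proposal is likewise not a proof: it is a plan for a reduction, together with an honest inventory of the obstacles (the convexity of $s\mapsto\binom{s}{2}$, the two partially taken parts permitted by Proposition~\ref{prop:two}, and the rounding slack), and you yourself conclude by noting that the approach may well fail and that this is why the statement is a conjecture. In that sense your write-up is a more detailed and technically sharper version of the paper's own heuristic discussion preceding the conjecture, but it does not advance beyond the paper in the crucial respect: neither you nor the authors have a working reduction, and the case analysis you flag as ``where most of the work lies'' is not carried out. The NP membership portion is correct and routine; the hardness portion remains genuinely open.
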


However, as already mentioned, determining the strong geodetic number of complete $r$-partite graphs for fixed $r$ is polynomial. Using a computer program (implemented in Mathematica) we derive the results shown in Table~\ref{tbl:values}.

\begin{table}[htb]
	\centering
	\begin{tabular}{ c || c || c | c || c | c | c || c | c | c | c | c }
		\hline
		$\pi$ & $\langle 1 \rangle$ & $\langle 2 \rangle$ & $\langle 1^2 \rangle$ & $\langle 3 \rangle$ & $\langle 1,2 \rangle$ & $\langle 1^3 \rangle$ & $\langle 4 \rangle$ & $\langle 1,3 \rangle$ & $\langle 2^2 \rangle$ & $\langle 1^2, 2 \rangle$ & $\langle 1^4 \rangle$ \\ \hline \hline
		$\sg(K_{\pi})$ & 1 & 2 & 2 & 3 & 2 & 3 & 4 & 3 & 3 & 3 & 4 \\
		\hline
	\end{tabular}

	\medskip

	\begin{tabular}{ c || c | c | c | c | c | c | c }
		\hline
		$\pi$ & $\langle 5 \rangle$ & $\langle 1,4 \rangle$ & $\langle 2,3 \rangle$ & $\langle 1^2, 3 \rangle$ & $\langle 1, 2^2 \rangle$ & $\langle 1^3, 2 \rangle$ & $\langle 1^5 \rangle$  \\ \hline \hline
		$\sg(K_{\pi})$ & 5 & 4 & 3 & 3 & 4 & 4 & 5  \\
		\hline
	\end{tabular}

	\medskip

	\begin{tabular}{ c || c | c | c | c | c | c | c | c | c | c | c }
		\hline
		$\pi$ & $\langle 6 \rangle$ & $\langle 1,5 \rangle$ & $\langle 2,4 \rangle$ & $\langle 1^2,4 \rangle$ & $\langle 3^2 \rangle$ & $\langle 1,2,3 \rangle$ & $\langle 1^3,3 \rangle$ & $\langle 2^3 \rangle$ & $\langle 1^2,2^2 \rangle$ & $\langle 1^4, 2 \rangle$ & $\langle 1^6 \rangle$ \\ \hline \hline
		$\sg(K_{\pi})$ & 6 & 5 & 4 & 4 & 3 & 3 & 3 & 4 & 4 & 5 & 6 \\
		\hline
	\end{tabular}

	\medskip

	\begin{tabular}{ c || c | c | c | c | c | c | c | c }
		\hline
		$\pi$ & $\langle 7 \rangle$ & $\langle 1,6 \rangle$ & $\langle 2,5 \rangle$ & $\langle 1^2,5 \rangle$ & $\langle 3,4 \rangle$ & $\langle 1,2,4 \rangle$ & $\langle 1^3,4 \rangle$ & $\langle 1,3^2 \rangle$ \\ \hline \hline
		$\sg(K_{\pi})$ & 7 & 6 & 5 & 5 & 4 & 4 & 4 & 4 \\
		\hline
	\end{tabular}

	\medskip

	\begin{tabular}{ c || c | c | c | c | c | c | c }
		\hline
		$\pi$ & $\langle 2^2,3 \rangle$ & $\langle 1^2,2,3 \rangle$ & $\langle 1^4,3 \rangle$ & $\langle 1,2^3 \rangle$ & $\langle 1^3,2^2 \rangle$ & $\langle 1^5,2 \rangle$ & $\langle 1^7 \rangle$ \\ \hline \hline
		$\sg(K_{\pi})$ & 4 & 4 & 4 & 5 & 5 & 6 & 7 \\
		\hline
	\end{tabular}

	\caption{The strong geodetic numbers for some small complete multipartite graphs.}
	\label{tbl:values}
\end{table}

\section*{Acknowledgments}
\label{sec:Acknowledgments}

The authors would like to thank Sandi Klav\v zar and Valentin Gledel for a number of helpful conversations and suggestions.


\end{document}